\newtheorem{theorem}{Theorem}[section]
\newtheorem{lemma}[theorem]{Lemma}
\newtheorem{corollary}[theorem]{Corollary}
\theoremstyle{definition}
\newtheorem{example}[theorem]{Example}
\theoremstyle{remark}
\newtheorem{remark}[theorem]{Remark}
\numberwithin{equation}{section}
\def\EE{{\mathcal{E}}}
\def\FF{{\mathcal{F}}}
\def\tt{{\mathtt{t}}}
\def\ss{{\mathtt{s}}}
\def\cc{{\mathfrak{c}}}
\def\mm{{\mathfrak{m}}}
\begin{document}

\title[Orthogonal complement]{The orthogonal complements of $H^1(\mathbb{R})$ in its regular Dirichlet extensions}

\author{Yuncong Shen}
\address{School of Mathematical Sciences, Fudan University, Shanghai 200433, China.}
\email{yuncongshen13@fudan.edu.cn}

\author{ Liping Li*}
\address{Institute of Applied Mathematics, Academy of Mathematics and Systems Science, Chinese Academy of Sciences, Beijing 100190, China.}
\email{liping\_li@amss.ac.cn}
\thanks{*Partially supported by a joint grant (No. 2015LH0043) of China Postdoctoral Science Foundation and Chinese Academy of Science, and China Postdoctoral Science Foundation (No. 2016M590145).}

\author{Jiangang Ying**}
\address{School of Mathematical Sciences, Fudan University, Shanghai 200433, China.}
\email{jgying@fudan.edu.cn}
\thanks{** Partially supported by NSFC No. 11271240.}

\subjclass[2010]{Primary 31C25, 60J55; Secondary 60J60}

\date{\today}


\keywords{Regular Dirichlet extensions, regular Dirichlet subspaces, Dirichlet forms, orthogonal complements, darning processes}

\begin{abstract}
Consider the regular Dirichlet extension $(\EE,\FF)$ for one-dimensional Brownian motion, that $H^1(\mathbb{R})$ is a subspace of $\FF$ and $\EE(f,g)=\frac12\mathbf{D}(f,g)$ for $f,g\in H^1(\mathbb{R})$. Both $H^1(\mathbb{R})$ and $\FF$ are Hilbert spaces under $\EE_\alpha$ and hence there is $\alpha$-orthogonal compliment $\mathcal{G}_\alpha$. We give the explicit expression for functions in $\mathcal{G}_\alpha$ which then can be described by another two spaces. On the two spaces, there is a natural Dirichlet form in the wide sense and by the darning method, their regular representations are given.
\end{abstract}

\maketitle

\tableofcontents

\section{Introduction}
The notion of regular Dirichlet subspace was first raised for one-dimensional Brownian motion in \cite{FFY05}, and then studied in \cite{LY14, LY15} for the general cases and their structures. Precisely, let $E$ be a locally compact separable metric space and $m$ a Radon measure on $E$ fully supported on $E$. If two regular Dirichlet forms $(\mathcal{E}^1,\mathcal{F}^1)$ and $(\mathcal{E}^2,\mathcal{F}^2)$ on $L^2(E,m)$ satisfy
$$
\mathcal{F}^1\subset\mathcal{F}^2,\quad\mathcal{E}^2(u,v)=\mathcal{E}^1(u,v),\quad\forall u,v\in\mathcal{F}^1,
$$
then $(\mathcal{E}^1,\mathcal{F}^1)$ is called a regular Dirichlet subspace of $(\mathcal{E}^2,\mathcal{F}^2)$, and $(\mathcal{E}^2,\mathcal{F}^2)$ is called a regular Dirichlet extension of $(\mathcal{E}^1,\mathcal{F}^1)$. We refer the terminologies about the Dirichlet forms to \cite{CF12} and \cite{FOT11}.


It is well known that the associated Dirichlet form of one-dimensional Brownian motion on $L^2(\mathbb{R})$ is $(\frac12\mathbf{D},H^1(\mathbb{R}))$, where $H^1(\mathbb{R})$ is 1-Sobolev space and 
\[
	\mathbf{D}(u,v)=\int_\mathbb{R}u'(x)v'(x)dx,\quad u,v\in H^1(\mathbb{R}).
\]
The Lebesgue measure on $\mathbb{R}$ will also be denoted by $m$ throughout this paper. 
A characterization for regular Dirichlet subspaces of $(\frac12\mathbf{D},H^1(\mathbb{R}))$ was given in \cite{FFY05}. 
It turns out that every regular Dirichlet subspace of $(\frac12\mathbf{D},H^1(\mathbb{R}))$ is irreducible and its associated diffusion process may be characterized by a scale function $\ss$ on $\mathbb{R}$, which is absolutely continuous and $\ss'=0$ or $1$ a.e. (Cf. \cite{FFY05} and \cite[Proposition~2.1]{LS16-1}). Later in \cite{LY16}, the authors gave a complete characterization for regular Dirichlet extensions of $(\frac12\mathbf{D},H^1(\mathbb{R}))$. Although every regular Dirichlet extension of $(\frac12\mathbf{D},H^1(\mathbb{R}))$ is strongly local (Cf. \cite[Theorem~1]{LY15}) and thus it always corresponds to a diffusion process with no killing inside, the challenge is that unlike the subspace, this diffusion process is not necessarily irreducible. As in \cite{LY16}, let $a<b$ and $I=\langle a,b\rangle$ be one of $(a,b)$, $(a,b]$, $[a,b)$, and $[a,b]$. Denote by $\mathbf{T}(I)$ all strictly increasing and continuous functions $\mathtt t(x)$ on $I$ such that
$$
dx\ll d\mathtt t,\quad\frac{dx}{d\mathtt t}=0\text{ or }1,\quad d\tt\text{-a.e.}
$$
where `$\ll$' means `absolutely continuous'. We can extend $\mathtt t(x)$ to $[a,b]$ because $\mathtt t(x)$ is monotonic. Define
\begin{equation}\label{EQ2TIT}
\mathbf{T}_\infty(I):=\{t\in \mathbf{T}(I):\;\mathtt t(a)=-\infty\text{ iff }a\notin I,\text{ and }\mathtt t(b)=\infty\text{ iff }b\notin I\},
\end{equation}
where `iff' stands for `if and only if'.  Note that $\mathbf{T}_\infty(I)$ is not empty (Cf. \cite[Remark~3.1]{LY16}). 
One of the main results, Theorem~3.3, of \cite{LY16} showed that every regular Dirichlet extension $(\EE,\FF)$ of $(\frac12\mathbf{D},H^1(\mathbb{R}))$ is expressed as follows: there exist a series of at most countable disjoint intervals $\{I_n=\langle a_n,b_n\rangle:n\ge1\}$ satisfying $m((\bigcup_{n\ge1}I_n)^c)=0$ and a series of scale functions $\{\mathtt t_n\in \mathbf{T}_\infty(I_n):n\ge1\}$ such that
\begin{equation}\label{EQ1FFL}
\begin{aligned}
&\mathcal{F} = \left\{f\in L^2(\mathbb{R}): f|_{I_n}\in\mathcal{F}^n, \sum_{n\ge1}\mathcal{E}^n(f|_{I_n},f|_{I_n})<\infty \right\},\\
&\mathcal{E}(f,g) = \sum_{n\ge1}\mathcal{E}^n(f|_{I_n},g|_{I_n}),\quad f,g\in\mathcal{F},
\end{aligned}\end{equation}
where $(\mathcal{E}^n,\mathcal{F}^n)$ is defined by
\begin{align*}
&\mathcal{F}^n=\left\{f\in L^2(I_n): f\ll \mathtt t_n, \int_{I_n}\left(\frac{df}{d\mathtt t_n}\right)^2d\mathtt t_n<\infty\right\},\\
&\mathcal{E}^n(f,g)=\frac12\int_{I_n}\frac{df}{d\mathtt t_n}\frac{dg}{d\mathtt t_n}d\mathtt t_n,\quad f,g\in\mathcal{F}^n.
\end{align*}
Note that $f\ll \tt_n$ means that there exists an absolutely continuous function $g$ such that $f=g\circ \tt_n$ and $df/d\tt_n :=g'\circ \tt_n$. In other words, the extension $(\EE,\FF)$ can be decomposed into at most countable irreducible parts. Its associated diffusion process on each interval $I_n$ is an irreducible diffusion process with the scale function $\tt_n$ and speed measure $m|_{I_n}$. Moreover, the condition $m((\bigcup_{n\ge1}I_n)^c)=0$ indicates $(\bigcup_{n\ge1}I_n)^c$ is an $\EE$-polar set relative to $(\EE,\FF)$ (Cf. \cite[Remark~3.4(4)]{LY16}). Several examples of extensions are given in \cite[\S3.3]{LY16}. 

Since $(\frac12\mathbf{D},H^1(\mathbb{R}))$ is a regular Dirichet subspace of $(\EE,\FF)$ given by \eqref{EQ1FFL}, it follows that $H^1(\mathbb{R})$ is a closed subspace of $\FF$ with respect to the inner product $\EE_\alpha$ for any constant $\alpha>0$. Naturally, we can define the $\alpha$-orthogonal complement of $H^1(\mathbb{R})$ in $\FF$ by 
\begin{equation}\label{EQ1GAF}
	\mathcal{G}_\alpha:=\left\{f\in \FF: \EE_\alpha(f,g)=0,\forall g\in H^1(\mathbb{R}) \right\}. 
\end{equation}
We also write $\FF=H^1(\mathbb{R})\oplus_\alpha \mathcal{G}_\alpha$ or $\mathcal{G}_\alpha= \FF\ominus_\alpha H^1(\mathbb{R})$. 
Replacing $H^1(\mathbb{R})$ and $\FF$ by their extended Dirichlet spaces, the special case $\alpha=0$ was investigated in \cite[\S4.1]{LY16}. The main purpose of this paper is to explore the $\alpha$-orthogonal complement $\mathcal{G}_\alpha$ for any fixed constant $\alpha>0$. Recall that the same topic on the regular Dirichlet subspace of $(\frac12\mathbf{D},H^1(\mathbb{R}))$ was already considered in \cite{LS16}. There are also some discussions with similar form in Example 1.2.2 of \cite{FOT11}.

The structure of this paper is as follows. First we give the expression of $\mathcal{G}_\alpha$ as in \cite{LS16}, and then derive another explicit expression consists two functions $\cc_+$ and $\cc_-$. We show that $\cc_\pm$ form a Hilbert space $\mathfrak C_\pm$ to which there is a linear bijection from $\mathcal{G}_\alpha$. Furthermore, the natural quadratic form $\EE_\pm$ on $\mathfrak C_\pm$ is a Dirichlet form in the wide sense. Finally, we use the darning method to derive the regular representation of $(\EE_\pm,\mathfrak C_\pm)$ on each interval $I_n$.

\subsection*{Notations}
For each $n\ge1$, set
$$
U_n:=\left\{x\in I_n:\frac{dx}{d\mathtt t_n}=1\right\}.
$$
Note that $U_n$ is defined in the sense of $d\tt_n$-a.e. Set $W_n:= I_n\setminus U_n$. 
Clearly, 
\[
\begin{aligned}
&m(W_n)=0, \\ 
&\frac{dx}{d\tt_n}=1_{U_n}, \quad d\mathtt t_n\text{-a.e.},
\end{aligned}\]
which will be used frequently. Hence some fact holds `a.e. on $I_n$' is equivalent to it holds `$d\tt_n$-a.e. on $U_n$'. Set further
\[
	U:=\bigcup_{n\geq 1}U_n, \quad W:=U^c,\quad I:=\bigcup_{n\geq 1}I_n.
\]
Following \cite{LY16}, we write $I_n=\langle a_n,b_n\rangle$ where $a_n$ (resp. $b_n$) may or may not be in $I_n$. 
Given a function $f$ on $\mathbb{R}$ and a set $A\subset \mathbb{R}$, the restriction of $f$ to $A$ is denoted by $f|_A$. The restriction of a measure $m$ to $A$ is denoted by $m|_A$. In addition, if $\mathbf{S}$ is some symbol, then $\mathbf{S}_\pm$ means $\mathbf{S}_+$ or $\mathbf{S}_-$, i.e., the sentence or equation in which it is located holds for both $\mathbf{S}_+$ and $\mathbf{S}_-$.


\section{Characterizations of orthogonal complements}\label{SEC2}
Let $(\mathcal{E},\mathcal{F})$ be \eqref{EQ1FFL} a regular Dirichlet extension of $(\frac12 \mathbf{D},H^1(\mathbb{R}))$ on $L^2(\mathbb{R})$, $\{I_n=\langle a_n,b_n\rangle: n\ge1\}$ and $\{\mathtt t_n: n\ge1\}$ be the associated intervals and scale functions. Fix a constant $\alpha>0$ and $\mathcal{G}_\alpha$  given by \eqref{EQ1GAF} is the $\alpha$-orthogonal complement of $H^1(\mathbb{R})$ in $\FF$.

Denote by $H_e^1(\mathbb{R})$ and $\mathcal{F}_e$ the extended Dirichlet spaces of $H^1(\mathbb{R})$ and $\FF$ respectively. In \cite{LY16} the authors investigated the orthogonal complement of $H_e^1(\mathbb{R})$ in $\mathcal{F}_e$:
$$
\mathcal{G}:=\{f\in\mathcal{F}_e:\mathcal{E}(f,g)=0,\forall g\in H_e^1(\mathbb{R})\}.
$$
\cite[Theorem 4.2]{LY16} tells us that
\begin{equation}\label{POC}
\mathcal{G}=\left\{f\in\mathcal{F}_e: \frac{df|_{I_n}}{d\mathtt t_n}=0,d\mathtt t_n \text{-a.e. on }U_n \text{ for any }n\ge1 \right\},
\end{equation}
and $f\in\mathcal{F}_e$ can be decomposed uniquely up to a constant as
$$
f=f_1+f_2,\quad f_1\in H_e^1(\mathbb{R}),f_2\in\mathcal{G}.
$$
We use the similar method to derive the first expression of $\mathcal{G}_\alpha$ in the following theorem.

\begin{theorem}\label{T1}
	For any constant $\alpha>0$,
\begin{equation}\label{OC}
	\mathcal{G}_\alpha=\bigg\{f\in\mathcal{F}:\; \frac{df|_{I_n}}{d\mathtt t_n}(y) - \frac{df|_{I_m}}{d\tt_m}(x)=2\alpha\int_x^yf(z)dz, 
	\text{ a.e. } x\in I_m, y\in I_n,\; \forall m,n\ge1\bigg\}. 
\end{equation}
\end{theorem}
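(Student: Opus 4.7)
The plan is to characterize $f \in \mathcal{G}_\alpha$ by systematically rewriting the defining condition $\EE_\alpha(f,g) = 0$ as a vanishing integral against $g'$ over all test functions $g \in C_c^\infty(\mathbb{R})$, then invoke the du Bois-Reymond lemma. Since $C_c^\infty(\mathbb{R})$ is dense in $H^1(\mathbb{R})$ with respect to the $H^1$-norm (which is equivalent to $\EE_\alpha|_{H^1(\mathbb{R})\times H^1(\mathbb{R})}$ for $\alpha > 0$), by continuity it suffices to test against $g \in C_c^\infty(\mathbb{R})$.

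The first key computation expresses $\EE(f,g)$ in a form involving $g'$. For $g \in H^1(\mathbb{R}) \subset \FF$, I will show that $\frac{dg|_{I_n}}{d\tt_n} = g' \cdot 1_{U_n}$ $d\tt_n$-a.e., which follows by combining $g(x) - g(y) = \int_y^x g'(z) dz$ with $dx = 1_{U_n} d\tt_n$ on $I_n$ and uniqueness of the Radon-Nikodym derivative. Defining the piecewise derivative
\[
h(x) := \frac{df|_{I_n}}{d\tt_n}(x) \text{ for } x \in U_n, \quad h|_W := 0,
\]
the sum in \eqref{EQ1FFL} collapses (since $dx = d\tt_n$ on $U_n$ and $m(W) = 0$) to
\[
\EE(f,g) = \tfrac{1}{2} \int_{\mathbb{R}} h(x) g'(x) dx.
\]
For the $L^2$-term, pick any base point $c$ and set $F(x) := \int_c^x f(z) dz$, so $F \in H^1_{\mathrm{loc}}(\mathbb{R})$ with $F' = f$. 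For $g \in C_c^\infty(\mathbb{R})$, integration by parts gives $\int fg\,dx = -\int F g'\,dx$, hence
\[
\EE_\alpha(f,g) = \int_{\mathbb{R}} \bigl(\tfrac{1}{2} h(x) - \alpha F(x)\bigr) g'(x)\,dx.
\]

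Vanishing of this integral for all $g \in C_c^\infty(\mathbb{R})$ forces, by the du Bois-Reymond lemma on the connected space $\mathbb{R}$, that $\tfrac{1}{2}h - \alpha F \equiv C$ a.e. for some constant $C$. The constant drops out upon taking differences: for a.e.\ $x \in U_m$ and $y \in U_n$,
\[
\tfrac{1}{2} \frac{df|_{I_n}}{d\tt_n}(y) - \tfrac{1}{2} \frac{df|_{I_m}}{d\tt_m}(x) = \alpha\bigl(F(y) - F(x)\bigr) = \alpha \int_x^y f(z) dz,
\]
which, multiplying by $2$ and invoking the notational convention that ``a.e.\ on $I_n$'' is the same as ``$d\tt_n$-a.e.\ on $U_n$'', yields exactly \eqref{OC}. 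The converse direction runs the same chain of equalities in reverse: from the relation in \eqref{OC} one recovers that $\tfrac{1}{2}h - \alpha F$ is a.e.\ constant, hence $\EE_\alpha(f,g) = 0$ for $g \in C_c^\infty(\mathbb{R})$, and then for $g \in H^1(\mathbb{R})$ by density.

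The main technical hurdle is the first identification $\frac{dg|_{I_n}}{d\tt_n} = g'\cdot 1_{U_n}$, together with the careful bookkeeping showing that the $d\tt_n$-integral over $I_n$ collapses to a Lebesgue integral over $U_n$; once this piecewise-to-global reduction is in place, the rest is a standard variational argument. A minor subtlety is the measurability and integrability of $h$, which follows because $f \in \FF$ yields $\sum_n \EE^n(f|_{I_n},f|_{I_n}) < \infty$, i.e., $h \in L^2(\mathbb{R})$.
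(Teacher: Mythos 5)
Your proposal is correct and follows essentially the same route as the paper's proof: both rewrite $\EE_\alpha(f,g)$ for $g\in C_c^\infty(\mathbb{R})$ as $\int \bigl(\tfrac{1}{2}h-\alpha F\bigr)g'\,dx$ (using $m(W)=0$ and $dx=1_{U_n}\,d\tt_n$ to collapse the sum), conclude via the du Bois--Reymond lemma that the integrand's primitive factor is a.e.\ constant, kill the constant by differencing, and finish by $\EE_1$-density of $C_c^\infty(\mathbb{R})$ in $H^1(\mathbb{R})$. Your only deviations are cosmetic --- you work globally with a fixed base point $c$ where the paper works on an interval $(a,b)\supset\mathrm{supp}[g]$ taken arbitrarily large, and you make explicit the identification $\frac{dg|_{I_n}}{d\tt_n}=g'\cdot 1_{U_n}$ that the paper uses implicitly.
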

\begin{proof}
	Fix two functions $f\in\mathcal{F}$ and $g\in C_c^\infty(\mathbb{R})$ and suppose that $\text{supp}[g]$ is in some interval $(a,b)$. Then
	\begin{align*}
	\int_\mathbb{R}f(x)g(x)dx&=-\int_a^bf(x)dx\int_x^bg'(y)dy\\
	&=-\int_a^bg'(y)dy\int_a^yf(x)dx.
	\end{align*}
	Letting $f_n=f\cdot1_{I_n}$, we have
	\begin{align*}
	\mathcal{E}_\alpha(f,g)&=\frac12\sum_{n\ge1}\int_a^b\frac{df_n}{d\mathtt t_n}(y) g'(y)dy-\alpha\int_a^bg'(y)dy\int_a^yf(x)dx\\
	&=\int_a^bg'(y)dy\left[\frac12\sum_{n\ge1}\frac{df_n}{d\mathtt t_n}(y)- \alpha\int_a^yf(x)dx\right].
	\end{align*}
	Now assume that $f$ is in the right side of (\ref{OC}). Then for a.e. $x,y\in(\bigcup_{n\ge1}I_n)\cap (a,b)$ with $x\in I_m,y\in I_n$, we have
	\begin{align*}
	\frac12\sum_{n\ge1}\frac{df_n}{d\mathtt t_n}(x)- \alpha\int_a^xf(z)dz &=\frac12\frac{df_m}{dt_m}(x)- \alpha\int_a^xf(z)dz\\
	&=\frac12\frac{df_n}{d\mathtt t_n}(y)- \alpha\int_a^yf(z)dz\\
	&=\frac12\sum_{n\ge1}\frac{df_n}{d\mathtt t_n}(y)- \alpha\int_a^yf(z)dz \equiv C,
	\end{align*}
	where $C$ is a constant. Since $m((a,b)\setminus\bigcup_{n\ge1}I_n)=0$, it follows that
	$$
	\mathcal{E}_\alpha(f,g)=C\cdot\int_a^bg'(y)dy=0.
	$$
	Hence $f\in\mathcal{G}_\alpha$ because $C_c^\infty(\mathbb{R})$ is $\mathcal{E}_1$-dense in $H^1(\mathbb{R})$.
	
	On the contrary, assume $f\in\mathcal{G}_\alpha$. Then
	$$
	\mathcal{E}_\alpha(f,g)=0,\quad \forall g\in C_c^\infty(\mathbb{R}).
	$$
	Suppose $\text{supp}[g]\subset(a,b)$ as above. Let
	$$
	h(y):=\frac12\sum_{n\ge1}\frac{df_n}{d\mathtt t_n}(y)-\alpha\int_a^yf(x)dx,\quad y\in(a,b).
	$$
	Then
	$$
	\mathcal{E}_\alpha=\int_a^bg'(y)h(y)dy=0,\quad \forall g\in C_c^\infty(\mathbb{R}).
	$$
	Thus $h(x)$ is a constant for a.e. $x\in(a,b)$. It follows that
	$$
	0=2h(y)-2h(x)=\frac{df|_{I_n}}{d\mathtt t_n}(y) - \frac{df|_{I_m}}{dt_m}(x) - 2\alpha\int_x^yf(z)dz
	$$
	for a.e. $x\in I_m\cap(a,b),y\in I_n\cap(a,b)$. Since $(a,b)$ can be taken arbitrarily large, $f$ is in the right side of (\ref{OC}), which completes the proof.
\end{proof}
\begin{remark}
	In the case of $\alpha=0$, formally we have
\begin{equation}\label{EQ2GFF}
	\mathcal{G}_0=\left\{f\in\mathcal{F}:\frac{df|_{I_n}}{d\mathtt t_n}=c,\text{ a.e. on }I_n,\forall n\ge1 \right\}
\end{equation}
for some constant $c$. 
However, since $f\in\mathcal{G}_0\subset \FF$, it follows that
	$$
	\infty>\mathcal{E}(f,f)=\sum_{n\ge1}\mathcal{E}^n(f|_{I_n},f|_{I_n}) \ge \frac{c^2}{2}\sum_{n\ge1}\int_{I_n}1_{U_n}(x)\cdot d\mathtt t_n(x) = \frac{c^2}{2} m(\mathbb{R}).
	$$
Therefore $c=0$ and
	$$
	\mathcal{G}_0=\left\{f\in\mathcal{F}:\frac{df|_{I_n}}{d\mathtt t_n}=0,\text{ a.e. on }I_n,\forall n\ge1 \right\},
	$$
	which is the same as (\ref{POC}) if we replace $\FF$ by $\FF_\mathrm{e}$.
\end{remark}

Now we turn to formulate another explicit expression of $\mathcal{G}_\alpha$. Keep in mind that the function $f\in \FF$ is continuous on each interval $I_n$, but not necessarily continuous on $\mathbb{R}$. In fact, the restriction $f|_{I_n}$ of $f$ to $I_n$ is absolutely continuous with respect to $\tt_n$ by \eqref{EQ1FFL}, thus it is also continuous on $I_n$. For the second assertion, we refer an example to \cite[Example~4.4]{LY16}. Furthermore, $f$ does not have point-wise definition on $\left(\bigcup_{n\geq 1} I_n\right)^c$ since $m(\left(\bigcup_{n\geq 1} I_n\right)^c)=0$.

We first assume that a $d\tt_n$-version of $U_n$ is open for any $n\geq 1$, which will be cancelled in Theorem~\ref{T2}. This is a very natural assumption since the typical example of $W_n=I_n\setminus U_n$ is a Cantor-type set as in \cite{FFY05} and \cite[\S4.2]{LY16}. As \cite[(4.7)]{LY16}, write $U_n$ as a union of disjoint open intervals:
\begin{equation}\label{EQ2UNM}
	U_n=\bigcup_{m\geq 1} (a^n_m,b^n_m). 
\end{equation}
Recall that $U=\bigcup_{n\ge1}U_n$ and $W=U^c$. Fix $f\in \mathcal{G}_\alpha$ and positive $n,m\in \mathbb{N}$. Since $d\tt_n=dx$ on $(a^n_m,b^n_m)$, it follows from Theorem \ref{T1} that
$$
f'(y)-f'(x)=2\alpha\int_x^yf(z)dz,\quad x, y \in (a^n_m,b^n_m),
$$
and we attain the harmonic equation similar to \cite[\S3]{LS16}:
\begin{equation}\label{FF}
\frac12f''(x)=\alpha f(x),\quad x\in (a^n_m,b^n_m). 
\end{equation}
Clearly, $f$ is twice differentiable in $(a^n_m,b^n_m)$. Thus the solution of (\ref{FF}) is 
\[
	f(x)=\frac{\mathfrak{c}^{n,m}_+}{2}\cdot e^{\sqrt{2\alpha}x}+\frac{\mathfrak{c}^{n,m}_-}{2}\cdot e^{-\sqrt{2\alpha}x},\quad x\in (a^n_m,b^n_m)
\]
for two constants $\mathfrak{c}^{n,m}_+, \mathfrak{c}^{n,m}_-$. Therefore we have
\begin{equation}\label{EQ2FXC}
f(x)=\frac{\mathfrak{c}_+(x)}{2}e^{\sqrt{2\alpha}x}+\frac{\mathfrak{c}_-(x)}{2}e^{-\sqrt{2\alpha}x},\quad x\in U,
\end{equation}
with two functions $\mathfrak{c}_+,\mathfrak{c}_-$ satisfying $\mathfrak{c}_\pm(x)\equiv \mathfrak{c}^{n,m}_\pm$ for $x\in (a^n_m,b^n_m)$. This implies
\begin{equation}\label{EQ2CTT}
	\frac{d\mathfrak{c}_\pm}{d\tt_n}=0,\quad d\tt_n\text{-a.e. on }U_n, \; n\geq 1. 
\end{equation}
Roughly speaking, $\mathfrak{c}_\pm$ is a Cantor-type function on each interval $I_n$ as we noted above that $W_n$ is a Cantor-type set. 
Since $f$ is continuous on $I=\bigcup_{n\geq 1}I_n$ and $f|_{I_n}\ll \tt_n$, we expect that $\mathfrak{c}_\pm$ can be extended continuously to a function on $I$, which is still denoted by $\mathfrak{c}_\pm$, and $\mathfrak{c}_\pm|_{I_n}\ll \tt_n$. In other words, \eqref{EQ2FXC} holds for any $x\in I$ with two functions $\mathfrak{c}_+$ and $\mathfrak{c}_-$ on $I$ satisfying
\begin{itemize}
\item[\textbf{(C1)}] $\mathfrak{c}_\pm|_{I_n}\ll \tt_n$ for any $n\geq 1$ and \eqref{EQ2CTT} holds. 
\end{itemize}
Furthermore, if \textbf{(C1)} holds, some simple computations deduce that for any $n\geq 1$,
\begin{align}
&\frac{df|_{I_n}}{d\tt_n}(x)=\frac{\sqrt{2\alpha}}{2}\left(\mathfrak{c}_+(x)e^{\sqrt{2\alpha}x}-\mathfrak{c}_-(x)e^{-\sqrt{2\alpha}x} \right),\quad d\tt_n\text{-a.e. } x\in U_n, \label{EQ2FIT} \\
&\frac{df|_{I_n}}{d\tt_n}(x)=\frac{1}{2}\left(\frac{d\mathfrak{c}_+|_{I_n}}{d\tt_n}(x)e^{\sqrt{2\alpha}x}+\frac{d\mathfrak{c}_-|_{I_n}}{d\tt_n}(x)e^{-\sqrt{2\alpha}x}\right),\quad d\tt_n\text{-a.e. } x\in W_n. \label{EQ2DFT}
\end{align}
Denote the right side of \eqref{EQ2FIT} by $u(x)$ 
for $x\in U$. It follows from Theorem~\ref{T1} that 
\[
u(y)-u(x)=2\alpha \int_x^y f(z)dz, \quad \text{a.e. }x,y \in U.  
\]
Since $f\in L^2(\mathbb{R})\subset L^1_\mathrm{loc}(\mathbb{R})$, we can obtain 
\begin{itemize}
\item[\textbf{(C2)}] $u(x):=\frac{\sqrt{2\alpha}}{2}\left(\mathfrak{c}_+(x)e^{\sqrt{2\alpha}x}-\mathfrak{c}_-(x)e^{-\sqrt{2\alpha}x} \right),\; x\in U$ can be extended to an absolutely continuous function on $\mathbb{R}$. 
\end{itemize}
We need to point out that although $\mathfrak{c}_\pm$ is a constant on $(a^m_n,b^m_n)$ and hence absolutely continuous on $U$, it is usually not absolutely continuous on $W=\bigcup_{n\geq 1}W_n$. 

The following theorem indicates that \textbf{(C1)} and \textbf{(C2)} are not only necessary but also sufficient, and the assumption that $U_n$ is open is not essential. 

\begin{theorem}\label{T2}
	Suppose that $f\in \FF$. Then $f\in\mathcal{G}_\alpha$ if and only if
	\begin{equation}\label{FCC}
	f(x)=\frac{\mathfrak{c}_+(x)}{2}e^{\sqrt{2\alpha}x} + \frac{\mathfrak{c}_-(x)}{2}e^{-\sqrt{2\alpha}x},\quad x\in I,
	\end{equation}
	where $(\mathfrak{c}_+,\mathfrak{c}_-)$ is a pair of functions defined on $I$ satisfying \textbf{(C1)} and \textbf{(C2)}. Furthermore, $\mathfrak{c}_\pm$ in \eqref{FCC} is uniquely determined by $f$.
\end{theorem}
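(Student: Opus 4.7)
My plan is to prove Theorem~\ref{T2} in three parts---sufficiency, necessity (without the openness assumption on $U_n$), and uniqueness---with formulas \eqref{EQ2FIT} and \eqref{EQ2DFT} serving as the key computational tools together with Theorem~\ref{T1}.

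For sufficiency, assume $f\in\FF$ admits the representation \eqref{FCC} with $(\mathfrak{c}_+,\mathfrak{c}_-)$ satisfying \textbf{(C1)} and \textbf{(C2)}. I would differentiate \eqref{FCC} via the Leibniz rule with respect to $\tt_n$, using \textbf{(C1)} to kill $d\mathfrak{c}_\pm/d\tt_n$ on $U_n$ (recovering \eqref{EQ2FIT}) and $dx/d\tt_n=1_{U_n}$ to kill the smooth contributions on $W_n$ (recovering \eqref{EQ2DFT}). The function $u$ of \textbf{(C2)} then equals $df|_{I_n}/d\tt_n$ on $U_n$, and differentiating $u(x)=\tfrac{\sqrt{2\alpha}}{2}(\mathfrak{c}_+e^{\sqrt{2\alpha}x}-\mathfrak{c}_-e^{-\sqrt{2\alpha}x})$ on $U$ (where $\mathfrak{c}_\pm'=0$ by \textbf{(C1)}) gives $u'(x)=2\alpha f(x)$ a.e.\ on $\mathbb{R}$, since $W$ has Lebesgue measure zero. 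Hence $u(y)-u(x)=2\alpha\int_x^y f(z)dz$ for all $x,y\in\mathbb{R}$, and restricting to a.e.\ $x\in U_m,y\in U_n$ (which is enough since a.e.\ on $I_m\cup I_n$ reduces to a.e.\ on $U_m\cup U_n$) Theorem~\ref{T1} delivers $f\in\mathcal{G}_\alpha$.

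For necessity, I bypass the openness assumption as follows. Given $f\in\mathcal{G}_\alpha$, set $u(x):=df|_{I_n}/d\tt_n(x)$ for $d\tt_n$-a.e.\ $x\in U_n$, a definition of $u$ on $U=\bigcup_n U_n$. Theorem~\ref{T1} together with $f\in L^2\subset L^1_{\mathrm{loc}}$ yields $u(y)-u(x)=2\alpha\int_x^y f(z)dz$ for a.e.\ $x,y\in U$; fixing a base point $x_0\in U$ and setting $\tilde u(y):=u(x_0)+2\alpha\int_{x_0}^y f(z)dz$ produces an absolutely continuous extension of $u$ to all of $\mathbb{R}$ without any openness hypothesis on $U_n$. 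I then solve the linear system for the two exponentials to define
\[
\mathfrak{c}_+(x):=e^{-\sqrt{2\alpha}x}\bigl(f(x)+\tilde u(x)/\sqrt{2\alpha}\bigr),\qquad \mathfrak{c}_-(x):=e^{\sqrt{2\alpha}x}\bigl(f(x)-\tilde u(x)/\sqrt{2\alpha}\bigr)
\]
on each $I_n$, so that \eqref{FCC} is immediate and \textbf{(C2)} holds by construction. Because $f|_{I_n}\ll\tt_n$, $\tilde u|_{I_n}$ is absolutely continuous (hence $\ll dx\ll \tt_n$), and the exponentials are smooth, we get $\mathfrak{c}_\pm|_{I_n}\ll\tt_n$; a Leibniz-rule computation on $U_n$ (where $d\tt_n=dx$ and $\tilde u'=2\alpha f$) shows $d\mathfrak{c}_\pm/d\tt_n=0$ $d\tt_n$-a.e.\ on $U_n$, completing \textbf{(C1)}.

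For uniqueness, if $(\mathfrak{c}_\pm)$ and $(\tilde{\mathfrak{c}}_\pm)$ both satisfy \textbf{(C1)}, \textbf{(C2)}, and \eqref{FCC}, let $g_\pm:=\mathfrak{c}_\pm-\tilde{\mathfrak{c}}_\pm$. Then $g_+e^{\sqrt{2\alpha}x}+g_-e^{-\sqrt{2\alpha}x}\equiv 0$ on $I$; on $U_n$, where $dg_\pm/dx=0$ by \textbf{(C1)}, differentiating yields $\sqrt{2\alpha}\,(g_+e^{\sqrt{2\alpha}x}-g_-e^{-\sqrt{2\alpha}x})=0$, hence $g_\pm\equiv 0$ on $U_n$. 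Continuity of $g_\pm$ on $I_n$ (from $g_\pm\ll\tt_n$) and density of $U_n$ in $I_n$ (since $m(W_n)=0$) extend this to all of $I_n$. The main obstacle I anticipate is the clean verification that my candidate $\mathfrak{c}_\pm$ fulfills the ``$d\mathfrak{c}_\pm/d\tt_n=0$ on $U_n$'' clause of \textbf{(C1)} in full generality: the computation is transparent on the open part of $U_n$, but in the Cantor-type setting one must keep careful track of which $d\tt_n$-versions are being used and of the interplay between absolute continuity with respect to $dx$ on $U$ and with respect to $d\tt_n$ on $W$.
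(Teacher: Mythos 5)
Your proposal is correct and follows essentially the same route as the paper's proof: for necessity you construct the absolutely continuous function $u$ with $u'=2\alpha f$ from Theorem~\ref{T1} and define $\mathfrak{c}_\pm$ by exactly the paper's formulas \eqref{EQ2CXC}, and for sufficiency you recover \eqref{EQ2FIT} from \textbf{(C1)}--\textbf{(C2)} and feed the identity $u(y)-u(x)=2\alpha\int_x^y f(z)\,dz$ back into Theorem~\ref{T1}, precisely as the paper does (note its proof likewise never invokes openness of $U_n$, so your ``bypass'' coincides with the paper's own argument). Your uniqueness step (subtracting two representations, differentiating with respect to $\tt_n$ on $U_n$, and using continuity of $\mathfrak{c}_\pm|_{I_n}$ together with $m(W_n)=0$) is just a more explicit rendering of the paper's one-line appeal to \eqref{EQ2CXC}, so no substantive difference remains.
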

{\begin{proof}
	Suppose $f\in\mathcal{G}_\alpha$. It follows from \eqref{OC} that there exists an absolutely continuous function $u$ on $\mathbb{R}$ such that $u'(x)=2\alpha f(x)$ on $I$ and
	\begin{equation}\label{UFT}
	u(x)=\frac{df|_{I_n}}{d\mathtt{t}_n}(x),\quad d\mathtt{t}_n\text{-a.e. }x\in U_n.
	\end{equation}
	Define $\mathfrak{c}_+,\mathfrak{c}_-$ by
	\begin{equation}\label{EQ2CXC}
	\begin{aligned}
	&\mathfrak{c}_+(x):=e^{-\sqrt{2\alpha}x}\left( f(x)+\frac{1}{\sqrt{2\alpha}}u(x) \right), \\
	&\mathfrak{c}_-(x):=e^{\sqrt{2\alpha}x}\left( f(x)-\frac{1}{\sqrt{2\alpha}}u(x) \right)
	\end{aligned}\end{equation}
	for $x\in I$. Then we have \eqref{FCC}, and \textbf{(C2)} holds. Clearly $\mathfrak{c}_{\pm}|_{I_n}\ll\tt_n$ and the density on each $I_n$ is
	\[
	\frac{d\mathfrak{c}_\pm}{d\mathtt{t}_n}=e^{\mp\sqrt{2\alpha}x}\left( \frac{df}{d\mathtt{t}_n}- u(x)\frac{dx}{d\mathtt{t}_n}\right).
	\]
	Hence \eqref{EQ2CTT} follows from \eqref{UFT}.

	On the contrary, suppose that $f$ is defined by \eqref{FCC} with $(\cc_+,\cc_-)$ satisfying \textbf{(C1)} and \textbf{(C2)}. By \eqref{EQ2FIT} and \textbf{(C2)}, we have
	\[
	\frac{df|_{I_n}}{d\tt_n}(x)=u(x),\quad d\tt_n\text{-a.e. } x\in U_n,
	\]
	where $u$ is absolutely continuous on $\mathbb{R}$. Hence for $d\mathtt{t}_m$-a.e. $x\in U_m$ and $d\mathtt{t}_n$-a.e. $y\in U_n$,
	\begin{align*}
	\frac{df|_{I_n}}{d\mathtt{t}_n}(y)-\frac{df_{I_m}}{d\mathtt{t}_m}(x) &= \int_x^yu'(z)dz \\
	&=\sum_{n\ge1}\int_{(x,y)\cap U_n} \frac{du}{d\mathtt{t}_n}d\mathtt{t}_n\\
	&=\sum_{n\ge1}\int_{(x,y)\cap U_n} \alpha \left( \mathfrak{c}_+(z)e^{\sqrt{2\alpha}z} + \mathfrak{c}_-(z)e^{-\sqrt{2\alpha}z}\right) \frac{dz}{d\mathtt{t}_n}d\mathtt{t}_n.\\
	&=2\alpha\sum_{n\ge1}\int_{(x,y)\cap U_n} f(z)dz \\
	&=2\alpha\int_x^y f(z)dz.
	\end{align*}
The uniqueness of $\mathfrak{c}_\pm$ follows from \eqref{EQ2CXC}. 
That completes the proof.
\end{proof}

\begin{remark}\label{RM24}
\begin{itemize}
\item[(1)] It follows from the proof that if $(\cc_+$, $\cc_-)$ satisfies \textbf{(C1)} and \textbf{(C2)}, then
\[
	\frac{d\mathfrak{c}_+|_{I_n}}{d\tt_n}(x)e^{\sqrt{2\alpha}x}=\frac{d\mathfrak{c}_-|_{I_n}}{d\tt_n}(x)e^{-\sqrt{2\alpha}x},\quad d\tt_n\text{-a.e. } x\in W_n, n\geq 1.
\]
In other words, the two terms in the right side of \eqref{EQ2DFT} are equal. 
\item[(2)] When $f$ has the form of \eqref{FCC}, $f\in \FF$ can be read as the $L^2$-integrability of \eqref{FCC}, \eqref{EQ2FIT}, and \eqref{EQ2DFT}, which are equivalent respectively to
\[
	\int_\mathbb{R}\mathfrak{c}_+(x)^2e^{2\sqrt{2\alpha}x}dx<\infty,\quad \int_\mathbb{R}\mathfrak{c}_-(x)^2e^{-2\sqrt{2\alpha}x}dx<\infty,
\]
and
\[
\sum_{n\ge1}\int_{W_n}\left( \frac{d\mathfrak{c}_+}{d\tt_n}e^{\sqrt{2\alpha}x} \right)^2 d\tt_n = \sum_{n\ge1}\int_{W_n}\left( \frac{d\mathfrak{c}_-}{d\tt_n}e^{-\sqrt{2\alpha}x} \right)^2 d\tt_n<\infty, 
\]
\item[(3)] When $\alpha=0$, clearly $\mathfrak{c}_+=\mathfrak{c}_-$ and \textbf{(C2)} is trivial ($u\equiv 0$). Hence \eqref{FCC} is actually the same as the condition in \eqref{EQ2GFF}. 
\end{itemize}
\end{remark}
}

The following is a very simple example of $(\cc_+$, $\cc_-)$ satisfying \textbf{(C1)} and \textbf{(C2)}. 

\begin{example}\label{EXA25}
Fix an integer $n$ and $-\infty=d_0<d_1<\cdots<d_n<d_{n+1}=\infty$. Let $I_i:=(d_{i-1}, d_i)$ for $1\leq i\leq n+1$ be the intervals associated with the extension $(\EE,\FF)$. Take two sets of constants $\{\mathfrak{c}_+^{(i)}:i=1,\dots,n+1\}$ and $\{\mathfrak{c}_+^{(i)}:i=1,\dots,n+1\}$ such that
	\[
	\mathfrak{c}_+^{(i)}e^{\sqrt{2\alpha}d_i}- \mathfrak{c}_-^{(i)}e^{-\sqrt{2\alpha}d_i}= \mathfrak{c}_+^{(i+1)}e^{\sqrt{2\alpha}d_i}- \mathfrak{c}_-^{(i+1)}e^{-\sqrt{2\alpha}d_i},\quad i=1,\dots,n
	\]
and	$\mathfrak{c}_-^{(1)}=\mathfrak{c}_+^{(n+1)}=0$. Define 
\[
	\mathfrak{c}_\pm(x):=\cc^{(i)}_\pm,\quad x\in I_i, \; 1\leq i\leq n+1. 
\]
Clearly, $(\cc_+,\cc_-)$ satisfies \textbf{(C1)} and \textbf{(C2)}. Furthermore, it is easy to check that $f\in \FF$ and hence $f\in \mathcal{G}_\alpha$.
In particular, for $a=d_i$ ($1\leq i\leq n$), 
$$
f(x):=\begin{cases}
e^{\sqrt{2\alpha}(x-a)}\quad & x<a,\\
-e^{\sqrt{2\alpha}(a-x)}\quad & x>a.
\end{cases}
$$
Then $f$ is in $\mathcal{G}_\alpha$.  
\end{example}

\section{Darning processes}

The purpose of this section is to derive an induced `darning process' from the class of functions $\mathfrak{c}_\pm$, which are discussed in \S\ref{SEC2}. We first recall the darning process of $\mathcal{G}$ in \cite[\S4.2]{LY16}. Note that $\mathcal{G}$ is given by \eqref{POC}, which is a pseudo orthogonal complement of $H^1_\mathrm{e}(\mathbb{R})$ in $\FF_\mathrm{e}$ with respect to $\EE$. Clearly $\mathcal{G}$ is not a `real' Dirichlet space since it is not dense in $L^2(\mathbb{R})$. However, in \cite[Lemma~4.6]{LY16} the authors assert that $\mathcal{G}$ with the form $\EE$ is a Dirichlet form in the wide sense on $L^2(\mathbb{R})$, which means that it satisfies all the conditions of a Dirichlet form except for the denseness in $L^2(\mathbb{R})$. Note that $(\mathbb{R}, m, \mathcal{G}, \EE)$ is called a `D-space' in Fukushima's terminology.  It is proved in \cite{F71} by Fukushima that every D-space (say $(\mathbb{R}, m, \mathcal{G}, \EE)$) has a regular representation in the sense that there exist a regular Dirichlet form $(\EE',\mathcal{G}')$ on another Hilbert space $L^2(E',m')$ and an algebraic isomorphism $\Phi$ from $\mathcal{G}\cap L^\infty(\mathbb{R})$ to $\mathcal{G}'\cap L^\infty(E')$ such that 
\begin{equation}\label{EQ3UUU}
\|u\|_\infty=\|\Phi u\|_\infty,\quad \EE(u,u)=\EE'(\Phi u, \Phi u),\quad (u,u)_m=(\Phi u, \Phi u)_{m'}. 
\end{equation}
Following \cite[\S3.2]{LY14}, the authors introduce the `darning method' to find the regular representation of $(\EE, \mathcal{G})$ in \cite[\S4.2]{LY16}. Roughly speaking, under the basic assumption `$U_n$ is open', any function in $\mathcal{G}$ is a constant on the interval $(a_m^n, b_m^n)$ of \eqref{EQ2UNM}. The darning method is regarding this interval as a whole part and collapsing it into a new point. After doing this on every interval of \eqref{EQ2UNM}, they attain a regular Dirichlet form on the new state space and it is actually the desired regular representation. We refer more details to \cite[\S3.2]{LY14} and \cite[\S4.2]{LY16}. However, the following example indicates that $(\EE, \mathcal{G}_\alpha)$ usually does not satisfy the Markovian property and hence is not a Dirichlet form in the wide sense. Therefore, the darning method on $\mathcal{G}$ in \cite{LY16} is not available for $\mathcal{G}_\alpha$. 


\begin{example}
Let us consider a special case that $(\EE,\FF)$ is irreducible, in other words, $I_1=I=\mathbb{R}$ and the scale function $\tt$ in $\mathbf{T}_\infty(\mathbb{R})$ is as follows: $\tt(x)=x+c(x)$, where $c$ is the standard Cantor function on $[0,1]$ with $c(x):=0$ for $x<0$ and $c(x):=1$ for $x>1$.  Clearly, 
\[
	U=\left\{x\in \mathbb{R}: \frac{dx}{d\tt}=1\right\}=(-\infty, 0)\bigcup \left( [0,1]\setminus K\right) \bigcup (1,\infty),
\]
where $K$ is the standard Cantor set in $[0,1]$. It follows from Theorem~\ref{T2} that the restriction of any function $f\in \mathcal{G}_\alpha$ to $(-\infty, 0)$ is expressed as
\[
	f(x)=k_+e^{\sqrt{2\alpha}x}+k_-e^{-\sqrt{2\alpha}x},\quad x<0,
\]
with two constants $k_+$ and $k_-$. Remark~\ref{RM24}~(2) yields $k_-=0$. Without loss of generality, we can take a function $f\in \mathcal{G}_\alpha$ such that 
\[
	f(x)=k_+e^{\sqrt{2\alpha}x},\quad x<0
\]
with $k_+>1$. Consider the normal contraction $\phi(x)=1\wedge x \vee 0$. Clearly $\phi\circ f\notin \mathcal{G}_\alpha$ and thus $\mathcal{G}_\alpha$ does not satisfy the Markovian property. 
\end{example}

The condition \eqref{EQ2CTT} in \S\ref{SEC2} inspires us to transfer the focus to the functions $\mathfrak{c}_\pm$. As a rough observation, if we put $\alpha=0$, then $\mathfrak{c}_+=\mathfrak{c}_-=f$ and it is exactly a function in $\mathcal{G}$, which satisfies the Markovian property as proved in \cite[Lemma~4.6]{LY16}. In what follows, we turn to characterize the class of all possible functions $\mathfrak{c}_\pm$. 

For convenience, write
\[
	h_\pm(x):=e^{\pm\sqrt{2\alpha}x}. 
\] 
From Theorem~\ref{T2} we know that $\mathfrak{c}_\pm$ is uniquely determined by $f$. Thus we may denote 
\[
	\mathfrak{c}_\pm:=\Gamma_\pm f,\quad f\in \mathcal{G}_\alpha
\]	
and induce linear mapping $\Gamma_\pm$ on $\mathcal{G}_\alpha$. Let
\begin{equation}\label{EQ3MXH}
\mathfrak{m}_\pm(dx):= h_\pm(x)^2dx.
\end{equation}
Define
\begin{equation}\label{EQ3CCR}
\begin{aligned}
	\mathfrak{C}_\pm:=\bigg\{\cc\in L^2(\mathbb{R};\mm_\pm): \ &\cc|_{I_n}\ll \tt_n,\ \frac{d\cc|_{I_n}}{d\tt_n}=0,\ d\tt_n\text{-a.e. on }U_n, \\
	&\sum_{n\geq 1}\int_{I_n}\left(\frac{d\cc}{d\tt_n}\cdot h_\pm\right)^2d\tt_n<\infty \bigg\},
\end{aligned}\end{equation}
\begin{equation}\label{EQ3ECC}
	\mathcal{E}_\pm(\cc_1,\cc_2):=\frac{1}{2} \sum_{n\geq 1}\int_{I_n}\frac{d\cc_1}{d\tt_n}\frac{d\cc_2}{d\tt_n}h_\pm^2d\tt_n,\quad\cc_1,\cc_2\in \mathfrak{C}_\pm,
\end{equation}
and
\[
	\EE_{\pm,1}(\cc_1,\cc_2):=\EE_\pm(\cc_1,\cc_2)+(\cc_1,\cc_2)_{\mathfrak{m}_\pm}. 
\]
Note that we regard $\cc_1,\cc_2\in\mathfrak C_\pm$ as equal if and only if $\cc_1(x)=\cc_2(x)$ for $x\in I$. Then we have the following result to describe the mapping $\Gamma_\pm$.

\begin{theorem}\label{TBIJ}
The mapping
\[
	\Gamma_\pm: \mathcal{G}_\alpha \rightarrow \mathfrak{C}_\pm, f\mapsto \cc_\pm
\]
induced by \eqref{FCC} is a linear bijection and for any $f\in \mathcal{G}_\alpha$, 
\begin{equation}\label{EQ3EGF}
	\EE_\pm(\Gamma_\pm f, \Gamma_\pm f)= \frac{1}{2}\int_W  d\mu_{\langle f\rangle}, 
\end{equation}
where $\mu_{\langle f\rangle}$ is the energy measure of $f$ relative to $(\EE,\FF)$. Furthermore, there exists a constant $K_\alpha>1$ such that for any $f\in \mathcal{G}_\alpha$,
\begin{equation}\label{EQ3KEK}
K_\alpha^{-1}\EE_{\pm, 1}(\Gamma_\pm f, \Gamma_\pm f)\leq \EE_1(f,f)\leq K_\alpha\EE_{\pm, 1}(\Gamma_\pm f, \Gamma_\pm f).
\end{equation}
\end{theorem}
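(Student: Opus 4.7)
The plan is to establish the four claims --- linearity, bijectivity, the identity \eqref{EQ3EGF}, and the norm equivalence \eqref{EQ3KEK} --- in that order, leveraging Theorem~\ref{T2} and Remark~\ref{RM24} throughout. Linearity of $\Gamma_\pm$ is immediate from the explicit formulas \eqref{EQ2CXC}, and the inclusion $\Gamma_\pm f\in\mathfrak{C}_\pm$ is a repackaging of what is already known: condition \textbf{(C1)} together with \eqref{EQ2CTT} gives $\cc_\pm|_{I_n}\ll\tt_n$ and $d\cc_\pm/d\tt_n=0$ $d\tt_n$-a.e. on $U_n$, while the $L^2(\mm_\pm)$-integrability of $\cc_\pm$ and the summability of the weighted densities over the $W_n$'s are precisely the conditions recorded in Remark~\ref{RM24}(2).

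For injectivity of $\Gamma_+$ (the $\Gamma_-$ case being symmetric), suppose $\Gamma_+ f=0$; then \eqref{FCC} reduces to $f=\cc_- h_-/2$, and Remark~\ref{RM24}(1) forces $d\cc_-/d\tt_n=0$ on $W_n$ as well, so $\cc_-$ is constant on each $I_n$. The resulting $u=-\sqrt{2\alpha}\cc_- h_-/2$ is then a pure exponential on each $I_n$; absolute continuity of $u$ on $\mathbb{R}$ (condition \textbf{(C2)}) together with $\cc_- h_-\in L^2(\mathbb{R})$ forces $\cc_-\equiv 0$ and hence $f=0$. For surjectivity, given $\cc_+\in\mathfrak{C}_+$ I would construct $\cc_-$ on each $I_n$ by prescribing its $\tt_n$-density to be $0$ on $U_n$ and $(d\cc_+/d\tt_n)h_+^2$ on $W_n$; this determines $\cc_-$ on each $I_n$ up to one additive constant, and those constants are then fixed by requiring $u:=\sqrt{2\alpha}(\cc_+h_+-\cc_- h_-)/2$ to extend absolutely continuously across all of $\mathbb{R}$, which is possible because $m((\bigcup_n I_n)^c)=0$. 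One then checks that $f:=(\cc_+h_++\cc_- h_-)/2$ lies in $\FF$ via Remark~\ref{RM24}(2) and in $\mathcal{G}_\alpha$ via Theorem~\ref{T2}, with $\Gamma_+ f=\cc_+$ by construction.

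The identity \eqref{EQ3EGF} is a short computation. Since $(\EE,\FF)$ is strongly local, the energy measure satisfies $d\mu_{\langle f\rangle}|_{I_n}=(df/d\tt_n)^2 d\tt_n$, and on $W_n$ equation \eqref{EQ2DFT} combined with Remark~\ref{RM24}(1) gives $(df/d\tt_n)^2=(d\cc_+/d\tt_n)^2 h_+^2$. Summing over $n$ and noting that $d\cc_+/d\tt_n=0$ on $U_n$ lets us extend the integrals from $W_n$ to $I_n$ without change, yielding
\[
\int_W d\mu_{\langle f\rangle}=\sum_{n\geq 1}\int_{W_n}\left(\frac{d\cc_+}{d\tt_n}\right)^2 h_+^2\,d\tt_n=2\EE_+(\cc_+,\cc_+),
\]
with the minus case handled symmetrically.

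Finally, for \eqref{EQ3KEK} I would split $\EE(f,f)$ along $I_n=U_n\cup W_n$. The $W$-contribution equals $\EE_+(\cc_+,\cc_+)$ by the previous step, while on $U_n$ we have $df/d\tt_n=u$ (from \eqref{EQ2FIT}) and $d\tt_n=dx$, so the $U$-contribution is $\|u\|_2^2/2$. Hence $\EE(f,f)=\EE_+(\cc_+,\cc_+)+\|u\|_2^2/2$, and in particular $\|u\|_2^2\leq 2\EE(f,f)$. Using $\cc_+ h_+=f+u/\sqrt{2\alpha}$ and $(a+b)^2\leq 2(a^2+b^2)$ gives $(\cc_+,\cc_+)_{\mm_+}\leq 2\|f\|_2^2+\|u\|_2^2/\alpha\leq C_\alpha\EE_1(f,f)$, so the upper bound $\EE_{+,1}(\Gamma_+ f,\Gamma_+ f)\leq K_\alpha\EE_1(f,f)$ follows. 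For the reverse inequality the cleanest route is to verify that $(\mathfrak{C}_\pm,\EE_{\pm,1})$ is a Hilbert space and apply the bounded inverse theorem to the bijection $\Gamma_\pm$. The main obstacle I anticipate is the constant-choosing step in surjectivity --- ensuring the absolute continuity of $u$ across all of $\mathbb{R}$ when the family $\{I_n\}$ does not tile $\mathbb{R}$ in an adjacent way --- and this same construction is what ultimately underlies the boundedness of the inverse.
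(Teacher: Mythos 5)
There is a genuine gap at the crux of the bijectivity claim, and it is exactly the one you flag yourself: the surjectivity construction. Your plan builds $\cc_-$ interval-by-interval (density $0$ on $U_n$, $(d\cc_+/d\tt_n)h_+^2$ on $W_n$) and then tries to ``fix the additive constants by requiring $u$ to extend absolutely continuously, which is possible because $m((\bigcup_n I_n)^c)=0$.'' That last clause is not a justification: $I^c$ is in general an uncountable Cantor-like closed null set with intervals accumulating, so matching constants is not a sequential adjacent-endpoint matching, and nothing in your sketch produces a consistent choice. Worse, even granting such a choice, you never verify $\cc_-\in L^2(\mathbb{R};\mm_-)$, which Remark~\ref{RM24}(2) needs before you can conclude $f\in\FF$; this integrability fails for every choice of the global additive constant except one, and even for the correct one it is not automatic. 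The paper resolves both difficulties at once with a single global formula: from \textbf{(C2)} it derives $\cc_-(x)-\cc_+(x)h_+(x)^2+2\sqrt{2\alpha}\int_{-\infty}^x\cc_+(z)h_+(z)^2dz\equiv C$, kills $C$ by observing that $Ch_-$ would otherwise have to lie in $L^2(\mathbb{R})$, and obtains \eqref{EQ3CCC}, namely $\cc_-(x)=\cc_+(x)h_+(x)^2-2\sqrt{2\alpha}\int_{-\infty}^x\cc_+(z)h_+(z)^2dz$. This defines $\cc_-$ on all of $I$ with no constants to choose, makes $u=2\alpha h_-(x)\int_{-\infty}^x\cc_+h_+^2dz$ manifestly absolutely continuous, and --- via the Hardy-type Lemma \eqref{EQ3GG} with $g=\cc_+h_+$ --- yields $\cc_-h_-\in L^2(\mathbb{R})$. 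That lemma is the missing quantitative ingredient in your proposal; without it the surjectivity half (and hence the bijection claim) does not close.

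The secondary issue is your route to the right-hand inequality of \eqref{EQ3KEK} via the bounded inverse theorem. It is workable in principle, but it requires an independent proof that $(\mathfrak{C}_\pm,\EE_{\pm,1})$ is complete, which you only assert; note that in the paper the closedness of $(\EE_\pm,\mathfrak{C}_\pm)$ (Corollary~\ref{COR33}) is \emph{deduced from} \eqref{EQ3KEK}, so you must supply a direct completeness argument to avoid circularity. The paper instead gets both inequalities explicitly: from \eqref{EQ3CCC} one has $\cc_+h_+-\cc_-h_-=2\sqrt{2\alpha}\,h_-(x)\int_{-\infty}^x\cc_+h_+^2dz$, so \eqref{EQ3GG} gives $\|\cc_+h_+-\cc_-h_-\|_{L^2}^2\le 4(\Gamma_\pm f,\Gamma_\pm f)_{\mm_\pm}$, which controls both $\|u\|_{L^2}$ and $\|f\|_{L^2}$ by $\EE_{\pm,1}(\Gamma_\pm f,\Gamma_\pm f)$ and produces the explicit constants $\alpha\wedge\tfrac12$ and $2\alpha+4$. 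The parts of your proposal that do work --- linearity, $\Gamma_\pm f\in\mathfrak{C}_\pm$ via Remark~\ref{RM24}(2), the identity \eqref{EQ3EGF} through \eqref{EQ2DFT} and Remark~\ref{RM24}(1), and the bound $\EE_{\pm,1}(\Gamma_\pm f,\Gamma_\pm f)\le K_\alpha\EE_1(f,f)$ via $\cc_\pm h_\pm=f\pm u/\sqrt{2\alpha}$ --- coincide with the paper's computations; your injectivity sketch is also essentially sound once you note that $-\tfrac{2}{\sqrt{2\alpha}}uh_+$ is absolutely continuous on $\mathbb{R}$ with a.e.\ vanishing derivative, hence globally constant, which glues the per-interval constants you obtain.
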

To proceed, we prepare a lemma.
\begin{lemma}
If $g\in L^2(\mathbb{R})$, then for any $\lambda>0$,
\begin{equation}\label{EQ3GG}
\begin{aligned}
&\int_\mathbb{R}\left[e^{-\lambda x}\int_{-\infty}^xg(z)e^{\lambda z}dz\right]^2dx\le \lambda^{-2}\|g\|_{L^2}^2,\\
&\int_\mathbb{R}\left[e^{\lambda x}\int_x^\infty g(z)e^{-\lambda z}dz\right]^2dx\le \lambda^{-2}\|g\|_{L^2}^2.
\end{aligned}
\end{equation}
\end{lemma}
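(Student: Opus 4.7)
The plan is to treat the first inequality via a weighted Cauchy--Schwarz argument and then derive the second from the first by a symmetry argument.

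For the first estimate, set $F(x) := e^{-\lambda x}\int_{-\infty}^x g(z)e^{\lambda z}dz = \int_{-\infty}^x g(z)e^{\lambda(z-x)}dz$. The key move is to split the exponential weight evenly and apply the Cauchy--Schwarz inequality: writing $e^{\lambda(z-x)} = e^{\lambda(z-x)/2}\cdot e^{\lambda(z-x)/2}$, one obtains
\[
F(x)^2 \leq \Bigl(\int_{-\infty}^x g(z)^2 e^{\lambda(z-x)}dz\Bigr)\Bigl(\int_{-\infty}^x e^{\lambda(z-x)}dz\Bigr) = \frac{1}{\lambda}\int_{-\infty}^x g(z)^2 e^{\lambda(z-x)}dz.
\]
Then I would integrate in $x$ over $\mathbb{R}$ and swap the order of integration by Fubini (legitimate since the integrand is non-negative), producing
\[
\int_\mathbb{R} F(x)^2\, dx \leq \frac{1}{\lambda}\int_\mathbb{R} g(z)^2\Bigl(\int_z^\infty e^{\lambda(z-x)}dx\Bigr)dz = \frac{1}{\lambda^2}\|g\|_{L^2}^2,
\]
which is exactly the first bound.

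For the second estimate, the cleanest route is the change of variable $y=-x$: define $\tilde g(y) := g(-y)$, which is again in $L^2(\mathbb{R})$ with $\|\tilde g\|_{L^2} = \|g\|_{L^2}$. After the substitution $z = -w$ inside the inner integral and $x=-y$ outside, the left-hand side becomes
\[
\int_\mathbb{R}\Bigl[e^{-\lambda y}\int_{-\infty}^y \tilde g(w)e^{\lambda w}dw\Bigr]^2 dy,
\]
to which the first inequality applies and yields $\lambda^{-2}\|\tilde g\|_{L^2}^2 = \lambda^{-2}\|g\|_{L^2}^2$.

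There is really no serious obstacle here; the proof is a standard Hardy-type estimate. The only subtle point is ensuring that the exponential weights distribute correctly under Cauchy--Schwarz so that $\int_{-\infty}^x e^{\lambda(z-x)}dz$ contributes exactly one factor of $1/\lambda$ and the Fubini step contributes the second, giving the sharp constant $1/\lambda^2$. (An alternative, equally short approach would be to recognise $F = g \ast h$ with $h(y) = e^{-\lambda y}\mathbf{1}_{y\geq 0}$ and invoke Young's convolution inequality, since $\|h\|_{L^1} = 1/\lambda$; this gives the same bound, but the direct computation above is self-contained and avoids citing external inequalities.)
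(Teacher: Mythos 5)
Your proof is correct, and it takes a genuinely different route from the paper's. You bound $F(x)^2$ by splitting the kernel $e^{\lambda(z-x)}=e^{\lambda(z-x)/2}\cdot e^{\lambda(z-x)/2}$, applying Cauchy--Schwarz, and then exchanging the order of integration, so that the two kernel integrals $\int_{-\infty}^x e^{\lambda(z-x)}dz$ and $\int_z^\infty e^{\lambda(z-x)}dx$ each contribute one factor of $1/\lambda$; this is the classical Schur-test computation, equivalent (as you note) to Young's convolution inequality with $\|h\|_{L^1}=1/\lambda$. The paper argues instead via an ODE/energy identity: writing $G(x)=e^{-\lambda x}\int_{-\infty}^x g(z)e^{\lambda z}dz$, it observes $G'=-\lambda G+g$, expands $g^2=(G')^2+2\lambda G'G+\lambda^2 G^2$, integrates over $(-\infty,a]$ so that the cross term becomes the boundary contribution $\lambda G(a)^2\ge 0$ (after verifying $G(x)\to 0$ as $x\to-\infty$ by a separate Cauchy--Schwarz estimate), and drops the nonnegative terms to get $\lambda^2\int_{-\infty}^a G^2\,dx\le\|g\|_{L^2}^2$, letting $a\to\infty$. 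Both arguments yield the same constant $\lambda^{-2}$. Your version is shorter and more robust: Tonelli with nonnegative integrands needs no boundary-limit verification, and the method generalizes immediately to $L^p$ estimates for convolution kernels. The paper's identity, on the other hand, delivers strictly more in passing, namely $\|G'\|_{L^2}^2+\lambda^2\|G\|_{L^2}^2\le\|g\|_{L^2}^2$, i.e.\ control of the derivative as well, which reflects the resolvent structure $G=(\lambda+\tfrac{d}{dx})^{-1}g$. Finally, your reflection $x\mapsto -x$ reducing the second inequality to the first is a clean, explicit substitute for the paper's ``the second follows similarly.''
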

\begin{proof}
Let $G(x)=e^{-\lambda x}\int_{-\infty}^xg(z)e^{\lambda z}dz$. By Cauchy-Schwarz inequality,
$$
G(x)^2\le e^{-2\lambda x}\int_{-\infty}^xg(z)^2dz\int_{-\infty}^xe^{2\lambda z}dz= \frac{1}{2\lambda}\int_{-\infty}^xg(z)^2dz\to 0
$$
as $x\to-\infty$. Since $G'=-\lambda G+g$, we have
\begin{align*}
\|g\|_{L^2}^2 &\ge\int_{-\infty}^ag(x)^2dx\\
&=\int_{-\infty}^a\left( {G'}^2+2\lambda G'G+\lambda^2G^2\right)dx\\
&=\int_{-\infty}^a\left( {G'}^2+\lambda^2G^2\right)dx+ \lambda G(a)^2\\
&\ge \lambda^2 \int_{-\infty}^aG^2dx.
\end{align*}
Letting $a\to\infty$ gives the first inequality and the second follows similarly.
\end{proof}
\begin{proof}[Proof of Theorem~\ref{TBIJ}]
The linearity is obvious. To prove $\Gamma_\pm$ is a bijection, it is exactly to show that for any $\cc_\pm\in\mathfrak C_\pm$, there is a unique $\cc_\mp\in\mathfrak C_\mp$ defined on $I$ satisfying \textbf{(C2)}.

Without loss of generality, we fix a $\cc_+\in\mathfrak C_+$, and assume that $\cc_-\in\mathfrak C_-$ satisfying \textbf{(C2)}. Then $v(x):=\cc_+(x)h_+(x)^2-\cc_-(x)$ has an absolutely continuous extension on $\mathbb{R}$ which we still denote by $v$. Hence for $x,y\in I$,
\begin{align*}
\cc_-(y)-\cc_-(x) &=\cc_+(y)h_+(y)^2- \cc_+(x)h_+(x)^2-\int_x^yv'(z)dz\\
&=\cc_+(y)h_+(y)^2- \cc_+(x)h_+(x)^2-\sum_{n\ge1}\int_{(x,y)\cap U_n}\frac{dv}{d\mathtt{t}_n}d\mathtt{t}_n\\
&=\cc_+(y)h_+(y)^2- \cc_+(x)h_+(x)^2- 2\sqrt{2\alpha}\int_x^y\cc_+(z)h_+(z)^2 dz.
\end{align*}
It follows that
$$
\cc_-(x)-\cc_+(x)h_+(x)^2+2\sqrt{2\alpha}\int_{-\infty}^x\cc_+(x)h_+(z)^2dz \equiv C
$$
for some constant $C$. By \eqref{EQ3GG}, we can see that $Ch_-(x)$ is in $L^2(\mathbb{R})$. Therefore $C=0$ and
\begin{equation}\label{EQ3CCC}
\cc_-(x)=\cc_+(x)h_+(x)^2 - 2\sqrt{2\alpha}\int_{-\infty}^x\cc_+(z)h_+(z)^2dz,\quad x\in I.
\end{equation}
Clearly $\cc_-$ defined by \eqref{EQ3CCC} is unique in $\mathfrak C_-$ and satisfies \textbf{(C2)}. Hence $\Gamma_\pm$ is a bijection. By  \eqref{EQ2FIT}, \eqref{EQ2DFT} and \eqref{FCC}, we have \eqref{EQ3EGF} and
\[
	\frac12 \int_Ud\mu_{\langle f\rangle} + (f,f)_m= \frac{\alpha}{2}\int_\mathbb{R} ((\Gamma_+f)h_+ - (\Gamma_-f)h_-)^2dx + \frac14 \int_\mathbb{R}((\Gamma_+f)h_+ + (\Gamma_-f)h_-)^2dx
\]
for $f\in\mathcal{G}_\alpha$. It follows from \eqref{EQ3GG} and \eqref{EQ3CCC} that
\[
	\int_\mathbb{R}((\Gamma_+f)h_+ - (\Gamma_-f)h_-)^2dx \le 4(\Gamma_\pm f,\Gamma_\pm f)_{\mathfrak m_\pm}.
\]
A little computation gives
\[
	\left(\alpha\wedge\frac12\right)\mathcal{E}_{\pm,1}(\Gamma_\pm f,\Gamma_\pm f) \le \mathcal{E}_1(f,f) \le (2\alpha+4)\mathcal{E}_{\pm,1}(\Gamma_\pm f,\Gamma_\pm f).
\]
That completes the proof.
\end{proof}

As a result, we assert that $(\EE_\pm, \mathfrak{C}_\pm)$ is a Dirichlet form on $L^2(\mathbb{R},\mathfrak{m}_\pm)$ in the wide sense. When (formally) $\alpha=0$, clearly $h_\pm\equiv 1$, $\mathfrak{C}_\pm=\mathcal{G}_0=\mathcal{G}\cap L^2(\mathbb{R})$ and $\EE_\pm=\EE$. This is nothing but what has been considered in \cite{LY16}.

\begin{corollary}\label{COR33}
Let $\mathfrak{C}_\pm, \EE_\pm$ and $\mathfrak{m}_\pm$ be given by \eqref{EQ3CCR}, \eqref{EQ3ECC} and \eqref{EQ3MXH} respectively. Then $(\EE_\pm, \mathfrak{C}_\pm)$ is a Dirichlet form on $L^2(\mathbb{R}, \mathfrak{m}_\pm)$ in the wide sense. 
\end{corollary}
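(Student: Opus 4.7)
The plan is to verify the three defining properties of a Dirichlet form in the wide sense on $L^2(\mathbb{R},\mathfrak{m}_\pm)$: that $(\EE_\pm,\mathfrak{C}_\pm)$ is a symmetric nonnegative bilinear form, that it is closed, and that normal contractions operate on it. Symmetry and nonnegativity are immediate from \eqref{EQ3ECC}, so only closedness and the Markovian property require real work.

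For closedness I would invoke Theorem~\ref{TBIJ} directly. Since $H^1(\mathbb{R})$ is an $\EE_1$-closed subspace of $\FF$, its $\alpha$-orthogonal complement $\mathcal{G}_\alpha$ is $\EE_1$-closed in $\FF$ and thus a Hilbert space under $\EE_1$. The two-sided comparison \eqref{EQ3KEK} shows that the linear bijection $\Gamma_\pm:(\mathcal{G}_\alpha,\EE_1)\to(\mathfrak{C}_\pm,\EE_{\pm,1})$ is a topological isomorphism, so $(\mathfrak{C}_\pm,\EE_{\pm,1})$ is also complete. Because $(\cc,\cc)_{\mathfrak{m}_\pm}\le\EE_{\pm,1}(\cc,\cc)$ by definition of $\EE_{\pm,1}$, convergence in $\EE_{\pm,1}$ forces convergence in $L^2(\mathbb{R},\mathfrak{m}_\pm)$, which is exactly the hypothesis needed to conclude that $(\EE_\pm,\mathfrak{C}_\pm)$ is closed as a form on $L^2(\mathbb{R},\mathfrak{m}_\pm)$.

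For the Markovian property, let $\phi$ be a normal contraction and fix $\cc\in\mathfrak{C}_\pm$. On each $I_n$ write $\cc|_{I_n}=g_n\circ \tt_n$ for an absolutely continuous $g_n$; since $\phi$ is $1$-Lipschitz with $\phi(0)=0$, the composition $\phi\circ g_n$ is again absolutely continuous, so $(\phi\circ\cc)|_{I_n}\ll\tt_n$. The pointwise contraction inequality
\[
\left|\frac{d(\phi\circ\cc)|_{I_n}}{d\tt_n}(x)\right|\le\left|\frac{d\cc|_{I_n}}{d\tt_n}(x)\right|,\quad d\tt_n\text{-a.e.},
\]
follows from the chain rule for the composition of a Lipschitz function with an absolutely continuous one. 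This inequality simultaneously preserves the vanishing on $U_n$ required in \eqref{EQ3CCR}, controls the weighted $L^2$ condition on $W_n$, and yields $\EE_\pm(\phi\circ\cc,\phi\circ\cc)\le\EE_\pm(\cc,\cc)$; together with $|\phi\circ\cc|\le|\cc|$, which gives membership in $L^2(\mathbb{R},\mathfrak{m}_\pm)$, this establishes the Markovian property.

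The main obstacle I anticipate is the Markovian verification, specifically the chain-rule inequality for the $\tt_n$-derivative, which I would justify by pulling back along $\tt_n$ to the standard fact that $|(\phi\circ g_n)'|\le|g_n'|$ a.e.\ for $\phi$ a normal contraction and $g_n$ absolutely continuous. I note in passing that this contrasts with the failure of the Markovian property for $(\EE,\mathcal{G}_\alpha)$ itself exhibited in the preceding example: the exponential factors $h_\pm$ in the representation \eqref{FCC} are precisely what obstruct a normal contraction of $f\in\mathcal{G}_\alpha$ from remaining in $\mathcal{G}_\alpha$, whereas for the coefficient function $\cc_\pm$ these weights sit inside the energy density rather than composing with the function itself, so contractions pass through harmlessly.
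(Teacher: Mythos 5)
Your proposal is correct and follows the paper's own (much terser) proof exactly: the paper likewise obtains closedness from \eqref{EQ3KEK} together with the $\EE_1$-closedness of $\mathcal{G}_\alpha$ in $\FF$, and dismisses symmetry and the Markovian property as ``easy to check.'' Your explicit verification of the Markovian property --- pulling the normal contraction back along $\tt_n$ to the standard inequality $|(\phi\circ g_n)'|\le |g_n'|$ a.e., which simultaneously preserves the vanishing on $U_n$ and the weighted energy bound --- is a sound filling-in of that omitted detail, not a different route.
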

\begin{proof}
It is easy to check that $(\EE_\pm, \mathfrak{C}_\pm)$ is a symmetric form with Markovian property. The closedness follows from \eqref{EQ3KEK} and the closedness of $(\EE,\mathcal{G}_\alpha)$.
\end{proof}

\begin{corollary}\label{C-FC}
For any $\alpha>0$, $f\in\mathcal{G}_\alpha$ if and only if
\[
f(x)=\cc_+(x)h_+(x)-\sqrt{2\alpha}h_-(x)\int_{-\infty}^x c_+(z)h_+(z)^2dz,\quad\cc_+\in\mathfrak C_+,
\]
or equivalently
\[
f(x)=\cc_-(x)h_-(x)-\sqrt{2\alpha}h_+(x)\int_x^\infty c_-(z)h_-(z)^2dz,\quad\cc_-\in\mathfrak C_-.
\]
\end{corollary}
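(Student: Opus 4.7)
The plan is to read this off directly from Theorem~\ref{T2} combined with the identity \eqref{EQ3CCC} established inside the proof of Theorem~\ref{TBIJ}, plus a symmetric companion identity.

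For the first equivalence, I would start from the expression $f(x) = \tfrac12\cc_+(x)h_+(x) + \tfrac12\cc_-(x)h_-(x)$ given by Theorem~\ref{T2}, where $\cc_\pm = \Gamma_\pm f$. By Theorem~\ref{TBIJ}, $\Gamma_+$ is a bijection onto $\mathfrak{C}_+$, and the proof of that theorem supplies the explicit relation
\[
\cc_-(x) = \cc_+(x)h_+(x)^2 - 2\sqrt{2\alpha}\int_{-\infty}^x \cc_+(z)h_+(z)^2\,dz, \quad x \in I.
\]
Substituting this into Theorem~\ref{T2} and using the identity $h_+(x)h_-(x) = 1$ (so that $h_-(x)h_+(x)^2 = h_+(x)$), the two $\tfrac12\cc_+ h_+$ contributions combine to give exactly
\[
f(x) = \cc_+(x)h_+(x) - \sqrt{2\alpha}\,h_-(x)\int_{-\infty}^x \cc_+(z)h_+(z)^2\,dz,
\]
and conversely, any $f$ of this form is in $\mathcal{G}_\alpha$ because $\Gamma_+$ is surjective onto $\mathfrak{C}_+$.

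For the second equivalence I would run the symmetric version of the argument used in the proof of Theorem~\ref{TBIJ}. Concretely, I would consider $\tilde v(x) := \cc_+(x) - \cc_-(x)h_-(x)^2$; since $\tilde v = \tfrac{2u}{\sqrt{2\alpha}}h_-$ with $u$ absolutely continuous (by \textbf{(C2)}), $\tilde v$ itself is absolutely continuous on $\mathbb{R}$. Computing $\tilde v'$ on $U$ using $\cc_\pm' = 0$ a.e.\ on $U$ gives $\tilde v'(x) = 2\sqrt{2\alpha}\cc_-(x)h_-(x)^2$, whence
\[
\cc_+(x) - \cc_-(x)h_-(x)^2 + 2\sqrt{2\alpha}\int_x^{\infty} \cc_-(z)h_-(z)^2\,dz \equiv C
\]
for some constant $C$. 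To identify $C$, I multiply through by $h_+$: the terms $\cc_+ h_+$ and $\cc_- h_-$ are in $L^2(\mathbb{R})$, and the second inequality of \eqref{EQ3GG} (applied to $g = \cc_- h_-$) shows that $h_+(x)\int_x^\infty \cc_-(z) h_-(z)^2\,dz$ is in $L^2(\mathbb{R})$ as well. Hence $Ch_+(x) \in L^2(\mathbb{R})$, which forces $C = 0$. Substituting the resulting expression for $\cc_+$ into $f = \tfrac12\cc_+h_+ + \tfrac12\cc_-h_-$ and again using $h_+h_-^2 = h_-$ produces the second stated formula.

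The only non-routine step is the symmetric derivation of the identity expressing $\cc_+$ in terms of $\cc_-$; everything else is bookkeeping with the exponential weights. This step is essentially a mirror image of the argument already carried out in the proof of Theorem~\ref{TBIJ}, so the main thing to watch is (i) getting the sign right when differentiating $\cc_- h_-^2$ on $U$, and (ii) using the correct half of \eqref{EQ3GG} to rule out a non-zero integration constant on the $+\infty$ side rather than the $-\infty$ side.
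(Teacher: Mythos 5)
Your proposal is correct and follows exactly the route the paper intends: the corollary is stated without proof as an immediate consequence of Theorem~\ref{T2}, the bijectivity of $\Gamma_\pm$ in Theorem~\ref{TBIJ}, and the identity \eqref{EQ3CCC}, which is what you carry out, including the correct mirror derivation of $\cc_+(x)=\cc_-(x)h_-(x)^2-2\sqrt{2\alpha}\int_x^\infty\cc_-(z)h_-(z)^2dz$ via the absolutely continuous function $\tilde v=\tfrac{2}{\sqrt{2\alpha}}uh_-$ and the second inequality of \eqref{EQ3GG} to force $C=0$. Your sign computation on $U$ and your choice of the correct half of \eqref{EQ3GG} for the $+\infty$ side are both right, so there is nothing to add.
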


Now we shall apply the darning method introduced in the beginning of this section. Note that the key to the darning method is an induced transform by $\tt_n$. Since $\tt_n(I_n)$ is unbounded when $I_n$ is not closed (Cf. \S\ref{EQ2TIT}), it is better to consider the restriction of $(\EE_\pm, \mathfrak{C}_\pm)$ to each invariant interval $I_n$ than $(\EE_\pm, \mathfrak{C}_\pm)$ itself. Fix an integer $n\geq 1$. Let
\[\begin{aligned}
	\mathfrak{C}^n_\pm&:=\{\cc|_{I_n}: \cc\in \mathfrak{C}_\pm\} \\
	 &=\bigg\{\cc \in L^2(I_n;\mm_\pm|_{I_n}): \cc\ll \tt_n, \frac{d\cc}{d\tt_n}=0, d\tt_n\text{-a.e. on }U_n,\, \int_{I_n}\left(\frac{d\cc}{d\tt_n}h_\pm\right)^2d\tt_n<\infty \bigg\},
	\end{aligned}\]
and for any $\cc \in \mathfrak{C}^n_\pm$, 
\[
\EE^n_\pm(\cc, \cc):= \frac{1}{2}\int_{I_n}\left(\frac{d\cc}{d\tt_n}\right)^2h_\pm^2d\tt_n.
\]
Further set $\mathfrak{m}^n_\pm:=\mathfrak{m}_\pm|_{I_n}$. Similar to Corollary~\ref{COR33} we can deduce that $(\EE^n_\pm ,\mathfrak{C}^n_\pm)$ is a Dirichlet form on $L^2(I_n, \mathfrak{m}^n_\pm)$ in the wide sense.

Recall that $I_n=\langle a_n, b_n\rangle$. Hereafter, we enforce the basic assumptions in \cite[\S4.2]{LY16}:
\begin{itemize}
\item[\bf{(H1)}] $U_n$ has (and is taken as) a $d\tt_n$-a.e. open version, and $U_n$ is written as \eqref{EQ2UNM}. 
\item[\bf{(H2)}] For any $x\in W_n\cap (a_n,b_n)$ and $\epsilon>0$, $d\tt_n((x-\epsilon, x+\epsilon)\cap W_n)>0$. 
\item[\bf{(H3)}] $d\tt_n(W_n)>0$. 
\end{itemize} 
Note that \textbf{(H2)} is not essential as noted in \cite[\S4.2]{LY16} and we refer the explanation for the structures of $U_n$ and $W_n$ under these assumptions to \cite[Remark~4.5]{LY16}. 

We shall describe the darning method to find the regular representation of $(\EE^n_\pm, \mathfrak{C}^n_\pm)$. Since the presence of weight function $h_\pm$, the case here is more complicated than that in \cite{LY16}. Let
\[
	l_n:=\inf\{x: x\in W_n\},\quad r_n:=\sup\{x: x\in W_n\}. 
\] 
Define the following transform on $I_n$ which collapses each open interval $(a_m^n, b_m^n)$ in \eqref{EQ2UNM} into a point: 
\begin{equation}\label{EQ3JNX}
	j_n(x):=\int_{e_n}^x1_{W_n}(y)d\tt_n(y),\quad x\in I_n,
\end{equation}
 where $e_n$ is a fixed point in $(a_n,b_n)$. Further set
 \[
	l_n^{*}:=j_n(l_n)\geq -\infty,\quad r_n^{*}:= j_n(r_n)\leq \infty.  
 \]
 For the right endpoints $r_n$ and $r_n^{*}$, there are the following possible situations:
\begin{itemize}
\item[\bf{(R1)}] $b_n<\infty, b_n\notin I_n$: since $U_n$ is open and $\tt_n(b_n)=\infty$, it follows that $r_n=b_n$ and $r^{*}_n=\infty$. 
\item[\bf{(R2)}] $b_n<\infty, b_n\in I_n$: we have $r_n=b_n$ and $r^{*}_n<\infty$. 
\item[\bf{(R3)}] $b_n=\infty$: 
\begin{itemize}
\item[\bf{(R3i)}] $r_n=\infty, r_n^{*}<\infty$. This is possible such as the following example. Without loss of generality, assume $I_n=[0,\infty)$. Take a standard Cantor function $c$ in $[0,1]$. Define a function $C(x)$ on $[0, \infty)$ as follows: $C(x):=c(x)$ for $x\in [0,1]$ and for any integer $k\geq 1$ and $x\in [k,k+1]$,
\[
	C(x):= 1+\frac{1}{2^2}+\cdots +\frac{1}{k^2} +\frac{1}{(k+1)^2}\cdot c(x-k). 
\]
Further set $\tt_n(x):=x+C(x)$. Then we have $r_n=\infty$ and 
\[
	r^{*}_n=1+\frac{1}{2^2}+\cdots <\infty. 
\]
\item[\bf{(R3ii)}] $r_n=\infty, r_n^{*}=\infty$. In the above example, replacing the definition of $C$ by
\[
	C(x):=k+c(x-k),\quad x\in [k,k+1],
\]
we have an example for this case. 
\item[\bf{(R3iii)}] $r_n<\infty, r^{*}_n<\infty$. As in Example~\ref{EXA25}, $r_1=r^{*}_1=1$. 
\end{itemize}
\end{itemize}
The corresponding cases for the left endpoints $l_n$ and $l^{*}_n$ are denoted by \textbf{(L1)}, \textbf{(L2)}, \textbf{(L3)}, \textbf{(L3i)}, \textbf{(L3ii)} and \textbf{(L3iii)}. Define the following intervals
\[
	J_+^{n*}:=\langle l^*_n, r^*_n \rangle, 
\]
where $r^*_n\in J_+^{n*}$ for and only for the case \textbf{(R2)} and $l^*_n\in J^{n*}_+$ for and only for the cases \textbf{(L2)}, \textbf{(L3i)} and \textbf{(L3iii)}, and
\[
J_-^{n*}:=\langle l^*_n, r^*_n \rangle, 
\]
where $r^*_n\in J_-^{n*}$ for and only for the cases \textbf{(R2)}, \textbf{(R3i)}, \textbf{(R3iii)} and $l^*_n\in J^{n*}_-$ for and only for the case \textbf{(L2)}. Note that in the cases \textbf{(R1)} and \textbf{(R3ii)} (resp. \textbf{(L1)} and \textbf{(L3ii)}), $r^*_n=\infty$ (resp. $l^*_n=-\infty$). 


The darning transform $j_n$ induces an image measure of $\mm^n_\pm$ on $J^{n*}_\pm$
\[
	\mm^{n*}_\pm:= \mm^n_\pm\circ j_n^{-1}
\]
by the convention $\mm^{n*}_\pm(J^{n*}_\pm\setminus j_n(I_n))=0$.  One may easily check that $\mm^{n*}_\pm$ is a Radon measure on $J^{n*}_\pm$ with full support. Note that $\mm^{n*}_\pm$ is not absolutely continuous with respect to the Lebesgue measure. Denote its Lebesgue decomposition by 
\[
	\mm^{n*}_\pm=h^*_\pm(x)^2dx+\kappa_\pm,
\]
where $h^*_\pm(x)^2dx$ is the absolutely continuous part. 

Fix a function $\cc\in \mathfrak{C}_\pm$. Since $\cc\ll \tt_n$ and $\cc$ is a constant on $(a^n_m,b^n_m)$, the darning transform induces an absolutely continuous function $\cc^*$ on $j_n(I_n)$ such that
\[
	\cc(x)=\cc^*\circ j_n(x),\quad x\in I_n. 
\]
We can recompute $\EE^n_\pm(\cc,\cc)$ as follows:
\[
	\EE^n_\pm(\cc,\cc)=\frac{1}{2} \int_{l_n}^{r_n} \left(\frac{d\cc^*\circ j_n}{d\tt_n}\right)^2h^2_\pm d\tt_n=\frac{1}{2}\int_{l^*_n}^{r^*_n} \left(\frac{d\cc^*}{dx}\right)^2 \left(h^*_\pm\right)^2dx. 
\]
Denote the last term above by $\EE^{n*}_\pm(\cc^*,\cc^*)$. Further define
\[
	\mathfrak{C}^{n*}_\pm:=\left\{\cc^*\in L^2(J^{n*}_\pm, \mm^{n*}_\pm): \exists\,\cc\in\mathfrak C_\pm^n\text{ s.t. }\cc^*(x)=\cc\circ j_n^{-1}(x)\text{ for }x\in J^{n*}_\pm\cap j_n(I_n) \right\}. 
\]
It can be seen that $j_n^{-1}$ is an isomorphism from $(\mathfrak{C}^{n}_\pm,\EE_\pm^n)$ to $(\mathfrak{C}^{n*}_\pm,\EE_\pm^{n*})$. Now we write the definition explicitly according to different cases.
\begin{center}
\begin{tabular}{|l|c|c|c|}
	\hline
	&$J_+^{n*}$	&$J_-^{n*}$	&$j_n(I_n)$	\\ \hline
	\textbf{(R1)} & $\langle l_n^*,\infty)$ & $\langle l_n^*,\infty)$ & $\langle l_n^*,\infty)$ \\ \hline
	\textbf{(R2)} & $\langle l_n^*,r_n^*]$ & $\langle l_n^*,r_n^*]$ & $\langle l_n^*,r_n^*]$ \\ \hline
	\textbf{(R3i)} & $\langle l_n^*,r_n^*)$	& $\langle l_n^*,r_n^*]$ & $\langle l_n^*,r_n^*)$ \\ \hline
	\textbf{(R3ii)} & $\langle l_n^*,\infty)$ & $\langle l_n^*,\infty)$ & $\langle l_n^*, \infty)$ \\ \hline
	\textbf{(R3iii)} & $\langle l_n^*,r_n^*)$ & $\langle l_n^*,r_n^*]$ & $\langle l_n^*,r_n^*]$ \\ \hline
\end{tabular}
\end{center}
Excluding the cases \textbf{(L3i)} \textbf{(R3iii)} for $(\mathfrak{C}^{n*}_+, \EE_+^{n*})$, and \textbf{(R3i)} \textbf{(L3iii)} for $(\mathfrak{C}^n_-, \EE_-)$, we have $J^{n*}_\pm= j_n(I_n)$, hence
\[
\mathfrak{C}^{n*}_\pm=\left\{\cc^*\in L^2(J^{n*}_\pm, \mm^{n*}_\pm): \cc^*\text{ is absolutely continuous},\,\EE^{n*}_\pm(\cc^*,\cc^*)<\infty \right\}.
\]
For the case $(\mathfrak{C}^{n*}_+, \EE_+^{n*})$ with \textbf{(L3i)}, $l_n^*\in J_+^{n*}$ but $l_n^*\notin j_n(I_n)$, hence
\[
\mathfrak{C}^{n*}_+=\left\{\cc^*\in L^2(J^{n*}_+, \mm^{n*}_+): \cc^*\text{ is absolutely continuous on }(l_n^*,r_n^*\rangle,\, \EE^{n*}_+(\cc^*,\cc^*)<\infty \right\},
\]
whether $r_n^*\in(l_n^*,r_n^*\rangle$ depends on the case of \textbf{(R)}. For the case $(\mathfrak{C}^{n*}_+, \EE_+^{n*})$ with \textbf{(R3iii)}, $r_n^*\notin J_\pm^{n*}$ but $r_n^*\in j_n(I_n)$. Note that any $\cc\in \mathfrak{C}^n_+$ is a constant on $[r_n,\infty)$ whereas $\mm^n_+((r_n,\infty))=\infty$. Thus $\cc=0$ on $[r_n,\infty)$. Therefore
\begin{align*}
\mathfrak{C}^{n*}_+=\big\{&\cc^*\in L^2(J^{n*}_+, \mm^{n*}_+): \cc^*\text{ is absolutely continuous on }\langle l_n^*,r_n^*),\\
&\lim_{x\uparrow r_n^*}\cc^*(x)=0,\,\EE^{n*}_+(\cc^*,\cc^*)<\infty \big\},
\end{align*}
whether $l_n^*\in\langle l_n^*,r_n^*)$ depends on the case of \textbf{(L)}. The case $(\mathfrak{C}^{n*}_-, \EE_-^{n*})$ with \textbf{(R3i)} and \textbf{(L3iii)} is similar. The main result of this section is the following. Keep in mind that the adjustment to the endpoints of $J_\pm^{n*}$ relative to $j_n(I_n)$ is to ensure the regularity.

\begin{theorem}
The quadratic form $(\EE^{n*}_\pm, \mathfrak{C}^{n*}_\pm)$ is a regular Dirichlet form on $L^2(J^{n*}_\pm, \mm^{n*}_\pm)$. Furthermore, $(J^{n*}_\pm, \mm^{n*}_\pm,  \mathfrak{C}^{n*}_\pm, \EE^{n*}_\pm)$ is a regular representation of $(I_n, \mm^n_\pm, \mathfrak{C}^{n}_\pm, \EE^{n}_\pm)$. 
\end{theorem}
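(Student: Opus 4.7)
My plan is to pull the entire structure on $I_n$ over to $J^{n*}_\pm$ via $j_n^{-1}$, show the transferred object is a genuine Dirichlet form, and then verify regularity by handling the boundary cases one at a time. The main work is bookkeeping across the six boundary-type cases \textbf{(R1)}--\textbf{(R3iii)} and their \textbf{(L)} counterparts.

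First, I would consolidate the change-of-variables identities that are already implicit in the preceding discussion. The map $\Phi:\cc\mapsto \cc\circ j_n^{-1}$ is linear, and
\[
(\cc_1,\cc_2)_{\mm^n_\pm}=(\Phi\cc_1,\Phi\cc_2)_{\mm^{n*}_\pm},\qquad \|\cc\|_\infty=\|\Phi\cc\|_\infty,\qquad \EE^n_\pm(\cc,\cc)=\EE^{n*}_\pm(\Phi\cc,\Phi\cc).
\]
The first is the definition $\mm^{n*}_\pm=\mm^n_\pm\circ j_n^{-1}$ together with the observation that any $\cc\in\mathfrak C^n_\pm$ is constant on each collapsed component $(a^n_m,b^n_m)$, so $\cc$ and $\cc^*$ agree on sets of matching measure. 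The $L^\infty$ identity is the same observation. The form identity is the computation already displayed before the statement of the theorem, where $\int_{l_n}^{r_n}(d(\cc^*\circ j_n)/d\tt_n)^2 h_\pm^2\,d\tt_n$ is converted to $\int_{l_n^*}^{r_n^*}(d\cc^*/dx)^2(h_\pm^*)^2dx$. Hence $\Phi$ is an isometric linear bijection of $(\mathfrak C^n_\pm,\EE^n_{\pm,1})$ onto $(\mathfrak C^{n*}_\pm,\EE^{n*}_{\pm,1})$ that preserves $L^\infty$ norms and $L^2$ inner products.

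Next I would verify the Dirichlet form axioms for $(\EE^{n*}_\pm,\mathfrak C^{n*}_\pm)$ directly on $J^{n*}_\pm$. Symmetry and nonnegativity are obvious. The Markovian property follows because absolute continuity is preserved by a normal contraction $\phi$, and $|(\phi\circ\cc^*)'|\leq|{\cc^*}'|$ a.e., so $\phi\circ\cc^*$ still lies in $\mathfrak C^{n*}_\pm$ with smaller energy; the required boundary condition in the cases \textbf{(R3iii)}/\textbf{(L3iii)} (i.e.\ the limit at $r_n^*$ or $l_n^*$ equals zero) is preserved provided one takes the convention $\phi(0)=0$, which is implicit in the definition of a normal contraction. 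Closedness is immediate from closedness of $(\EE^n_\pm,\mathfrak C^n_\pm)$ (established in Corollary~\ref{COR33}, in its restriction-to-$I_n$ form) via the isometry $\Phi$. For $L^2$-denseness of $\mathfrak C^{n*}_\pm$ in $L^2(J^{n*}_\pm,\mm^{n*}_\pm)$, I would argue that $C_c(J^{n*}_\pm)\cap\mathfrak C^{n*}_\pm$ is already dense in $L^2$: on any compact subinterval of $J^{n*}_\pm$ the Lipschitz functions lie in $\mathfrak C^{n*}_\pm$, and Lipschitz compactly supported functions are dense in $L^2$ against any Radon measure of full support (standard Stone--Weierstrass/approximation argument); in the cases where a vanishing boundary condition is imposed one restricts to Lipschitz functions supported strictly inside the open side of $J^{n*}_\pm$.

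For regularity, I would exhibit an explicit core: the set of piecewise linear, compactly supported functions whose support lies in $J^{n*}_\pm$ and that vanish at the boundary point $r_n^*$ (respectively $l_n^*$) exactly when the definition of $\mathfrak C^{n*}_\pm$ demands it. For any $\cc^*\in\mathfrak C^{n*}_\pm$, approximation is obtained by first truncating $\cc^*$ against a cutoff $\chi_k$ supported in $J^{n*}_\pm$, which converges to $\cc^*$ in $\EE^{n*}_{\pm,1}$-norm (here the weight $(h_\pm^*)^2$ together with $L^2(\mm^{n*}_\pm)$-integrability controls the tail energy; this is where the boundary conditions in \textbf{(R3iii)}/\textbf{(L3iii)} are crucial, because without the forced zero limit, cutoff would generate non-vanishing boundary energy near $r_n^*$ where $\mm^{n*}_\pm$ is infinite), and then mollifying the cutoff in the $x$-variable to produce a $C_c$ function. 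Uniform approximation of arbitrary elements of $C_c(J^{n*}_\pm)$ by this core is straightforward on intervals. Finally, the regular representation claim is immediate: the isomorphism $\Phi$ carries the wide-sense Dirichlet form $(\EE^n_\pm,\mathfrak C^n_\pm)$ on $L^2(I_n,\mm^n_\pm)$ to the regular Dirichlet form $(\EE^{n*}_\pm,\mathfrak C^{n*}_\pm)$ on $L^2(J^{n*}_\pm,\mm^{n*}_\pm)$, preserving sup-norms, $L^2$ inner products, and energies, which is exactly \eqref{EQ3UUU}. The main obstacle is case \textbf{(R3iii)} (and symmetrically \textbf{(L3iii)}): there $j_n(I_n)\supsetneq J^{n*}_\pm$, and one must justify that deleting the endpoint $r_n^*$ from the state space costs nothing algebraically (because $\cc\equiv 0$ on $[r_n,\infty)\cap I_n$ forces $\cc^*(r_n^*)=0$) while simultaneously yielding regularity (because $\{r_n^*\}$ is $\EE^{n*}_\pm$-polar thanks to the infinite mass of $\mm^{n*}_\pm$ near $r_n^*$ combined with the vanishing condition).
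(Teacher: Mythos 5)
Your high-level architecture is the same as the paper's: transfer everything through $\Phi:\cc\mapsto\cc\circ j_n^{-1}$, note that the identities \eqref{EQ3UUU} hold by construction, inherit closedness and the Markov property from $(\EE^n_\pm,\mathfrak{C}^n_\pm)$, and reduce regularity to a case analysis at the endpoints. The representation claim and the easy cases are handled the same way the paper handles them: in \textbf{(R2)} (and \textbf{(R3iii)} for the minus form) the domain sits inside $C([0,r^*_n])$, and in \textbf{(R1)}, \textbf{(R3ii)}, where $r^*_n=\infty$, a slowly varying cutoff $\theta_k$ with $|\theta'_k|\le 1/k$ does the job, which is your $\chi_k$. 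One small remark: your mollification step is superfluous, since every element of $\mathfrak{C}^{n*}_\pm$ is already absolutely continuous, so once a cutoff produces compact support you are already in $C_c(J^{n*}_\pm)$; the only real issue is boundary behavior.

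The genuine gap is case \textbf{(R3i)} (and its mirror \textbf{(L3i)}), which your uniform cutoff scheme does not cover. For $\mathfrak{C}^{n*}_-$ the finite endpoint $r^*_n$ is \emph{included} in $J^{n*}_-$, the energy weight $(h^*_-)^2=h_-^2\circ j_n^{-1}$ degenerates to $0$ there (since $j_n^{-1}(y)\to\infty$ as $y\uparrow r^*_n$), and $\int^{r^*_n}(h^*_+)^2dz$ may be finite or infinite. If it is finite, the endpoint has positive capacity and cutting a nonzero boundary value down to zero costs non-vanishing energy, so cutoff fails; if it is infinite, elements of $\mathfrak{C}^{n*}_-$ need not extend continuously to $r^*_n$ at all, so producing a $C_c(J^{n*}_-)$ approximant by one-sided smoothing in the degenerate-weight norm is unjustified. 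This is exactly where the paper does its real work: it truncates the \emph{derivative}, setting $\cc^*_k(x)=\cc^*(0)+\int_0^x\left((-k)\vee{\cc^*}'(z)\right)\wedge k\,dz$, getting Lipschitz (hence $C([0,r^*_n])$) approximants, and proves $\EE^{n*}_{-,1}$-convergence by dominated convergence using the Fubini estimate $\int_0^{r^*_n}\bigl(\int_0^x|{\cc^*}'(z)|dz\bigr)^2\,\mm^{n*}_-(dx)\le(2\sqrt{2\alpha})^{-1}Mr^*_n$, which hinges on $h_+h_-\equiv1$; your proposal has no substitute for this. Relatedly, your justification in \textbf{(R3iii)} is misplaced: there $r_n<\infty$, so $\mm^{n*}_+([x,r^*_n))$ is \emph{finite}; the infinite quantity is $\mm^n_+((r_n,\infty))$, which sits in the collapsed atom at $r^*_n$, a point excluded from $J^{n*}_+$, and its only role is the algebraic one you also note ($\cc\equiv0$ on $[r_n,\infty)$, whence $\cc^*(r^*_n-)=0$ and $\mathfrak{C}^{n*}_+\subset C_0([0,r^*_n))$, after which regularity follows from the criterion the paper cites). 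The place where an infinite-mass argument genuinely forces a vanishing boundary limit is \textbf{(R3i)} for $\mathfrak{C}^{n*}_+$, where $\mm^{n*}_+([x,r^*_n))=\infty$; your proposal also omits showing that the limit at $r^*_n$ exists there, which the paper gets from the Cauchy estimate via $\int_x^y(h^*_-)^2dz\to0$.
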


\begin{proof}
We prove the regularity for \textbf{(L2)} and all cases of \textbf{(R)}, then the other cases follow immediately. For simplicity, we assume $l^*_n=0$.

In case (\textbf{R2)}, $\mathfrak{C}^{n*}_\pm\subset C([0, r^*_n])$; and in \textbf{(R3iii)}, $\mathfrak{C}^{n*}_-\subset C([0, r^*_n])$. The regularity is clear.

For \textbf{(R1) (R3ii)}, notice $r^*_n=\infty$. There exists functions $\{\theta_k:k\ge1\}$ with compact support on $[0,\infty)$ such that $0 \leq \theta_k \leq 1$ and $\theta_k\uparrow 1$ and $|\theta'_k|<1/k$. Then for any $\cc^*\in\mathfrak C_\pm^{n*}$,
\begin{align*}
\EE_{\pm,1}^{n*}(\cc^*(1-\theta_k),\cc^*(1-\theta_k)) &=\int_0^\infty\cc^{*2}(1-\theta_k)^2 d\mm_\pm^{n*}+ \frac12\int_0^\infty\left( {\cc^*}'(1-\theta_k)+ \cc^*\theta_k'\right)^2 \left(h_\pm^*\right)^2dx\\
&\le \int_0^\infty\cc^{*2}\left[ (1-\theta_k)^2+ {\theta_k'}^2\right] d\mm_\pm^{n*}+ \int_0^\infty\left( {\cc^*}'\right)^2(1-\theta_k)^2 \left(h_\pm^*\right)^2dx\\
&\to 0
\end{align*}
as $k\to\infty$. Hence $\mathfrak C_\pm^{n*}\cap C_c(J_\pm^{n*})$ is $\EE_{\pm,1}^{n*}$-dense in $\mathfrak C_\pm^{n*}$.

The case \textbf{(R3i)} is different for $\mathfrak C_+^{n*}$ and $\mathfrak C_-^{n*}$. For $\cc^*\in\mathfrak C_-^{n*}$, let
\[
\cc^*_k(x)=\cc^*(0)+\int_0^x\left( (-k)\vee{\cc^*}'(z)\right) \wedge k\ dz.
\]
Then $\cc^*_k\in C([0, r^*_n])$ and $\cc^*_k(x)\to\cc^*(x)$ for $x\in[0, r^*_n)$ by the dominated convergence theorem. Note that $|\cc^*_k(x)-\cc^*(x)| \le \int_0^x({\cc^*}'(z)-k)^+ dz$, and
\begin{align*}
\left(\int_0^x |{\cc^*}'(z)|dz\right)^2 &\le \int_0^xh^*_+(z)^2dz\int_0^x {\cc^*}'(z)^2 h^*_-(z)^2 dz\\
&\le M\int_0^xh^*_+(z)^2dz
\end{align*}
for some constant $M$. We have
\begin{align*}
\int_0^{r^*_n}\left(\int_0^x |{\cc^*}'(z)|dz\right)^2 \mm_-^{n*}(dx) &\le M\int_0^{r^*_n} \mm_-^{n*}(dx)\int_0^xh^*_+(z)^2dz\\
&=M\int_{l_n}^\infty h_-(x)^2dx\int_{l_n}^x h_+(z)^21_{W_n}(z)d\tt_n(z)\\
&=M\int_{l_n}^\infty h_+(z)^2 1_{W_n}(z)d\tt_n(z)\int_z^\infty h_-(x)^2dx\\
&=(2\sqrt{2\alpha})^{-1}M\int_{l_n}^\infty h_+(z)^2h_-(x)^2 1_{W_n}(z)d\tt_n(z)\\
&=(2\sqrt{2\alpha})^{-1}M r^*_n<\infty.
\end{align*}
Hence $\cc^*_k\to\cc^*$ in $\EE_{-,1}^{n*}$ again by the dominated convergence theorem. For $\cc^*\in\mathfrak C_+^{n*}$, since
\begin{align*}
|\cc^*(x)-\cc^*(y)| &\le \int_x^y|{c^*}'(z)|dz\\
&\le \int_x^y h^*_-(z)^2dz\int_x^y{c^*}'(z)^2h^*_+(z)^2dz\to 0
\end{align*}
as $x,y\to r^*_n$, it follows that $\cc^*(r^*_n):=\lim_{x\uparrow r^*_n}\cc^*(x)$ exists. Furthermore, if $\cc^*(r^*_n)\ne0$, there exists $\delta,\varepsilon>0$ such that $\cc^*(r^*_n)^2>\varepsilon$ on $(r^*_n-\delta,r^*_n)$, therefore
\[
\int_0^{r^*_n}\cc^*(x)^2\mm_+^{n*}(dx)\ge \int_{r^*_n-\delta}^{r^*_n}\varepsilon\ \mm_+^{n*}(dx)=\infty,
\]
which is a contradiction. It follows that $\cc^*(r^*_n)=0$ and $\mathfrak C_+^{n*}\subset C_0([0,r^*_n))$, where $C_0$ denotes the space of functions vanish outside compact sets. Hence $(\mathfrak C_+^{n*},\EE_+^{n*})$ is regular (Cf. \cite{CF12} 1.3.12).

For \textbf{(R3iii)}, we also have  $\mathfrak C_+^{n*}\subset C_0([0,r^*_n))$, so the regularity follows.

Finally, to see $(J^{n*}_\pm, \mm^{n*}_\pm,  \mathfrak{C}^{n*}_\pm, \EE^{n*}_\pm)$ is a regular representation, define $\Phi:\mathfrak C^n_\pm\to\mathfrak C^{n*}_\pm$ by
\[
(\Phi\cc)(x)=\cc\circ j^{-1}(x),\quad \cc\in\mathfrak C^n_\pm.
\]
From the definition of $\mathfrak C^{n*}_\pm$ and $\EE^{n*}_\pm$, we see that equalities in \eqref{EQ3UUU} holds. Notice that $\mm^{n*}_\pm(J^{n*}_\pm\setminus j_n(I_n))=0$, and $j_n(I_n)\setminus J^{n*}_\pm=\emptyset$ except the case $(\mathfrak{C}^n_+, \EE_+)$ with \textbf{(R3iii)} (resp. $(\mathfrak{C}^n_-, \EE_-)$ with \textbf{(L3iii)}), and in that case $\cc=0$ on $[r_n,\infty)$ (resp. on $(-\infty,l_n]$).
\end{proof}

\begin{appendices}
\section{The orthogonal complements for Dirichlet subspaces of $H^1(\mathbb{R})$}
The conclusions under subspace situation can be derived similarly. Let $(\EE,\FF)$ be a regular Dirichlet subspace of $(\frac12\mathbf{D},H^1(\mathbb{R}))$ and $\mathcal{G}_\alpha$ be its $\alpha$-orthogonal complement. Then
\[
\begin{aligned}
&\mathcal{F}=\left\{f\in L^2(\mathbb{R}):\ f\ll \ss,\ \int_\mathbb{R}\left(\frac{df}{d\ss} \right)^2d\ss<\infty \right\}, \\
&\EE(f,g)=\frac{1}{2}\int_\mathbb{R} \frac{df}{d\ss}\frac{dg}{d\ss}d\ss,\quad f,g\in \FF,
\end{aligned}
\]
where $\ss$ is a strictly increasing and absolutely continuous function on $\mathbb{R}$ satisfying
\[
\ss'(x)=0\text{ or }1,\quad\text{a.e.}
\]
Define $G:=\{x\in\mathbb{R}:\ss'(x)=1 \}$, then (see \cite{LY14} and \cite{LS16})
\[
\mathcal{G}_\alpha=\left\{f\in H^1(\mathbb{R}):f'(y)-f'(x)=2\alpha\int_x^yf(z)dz,\text{ a.e. } x,y\in G \right\}.
\]
The proofs of the following theorems are actually the same as those in the previous sections, so we omit them.

\begin{theorem}
Suppose $f\in H^1(\mathbb{R})$. Then $f\in\mathcal{G}_\alpha$ if and only if
\[
f(x)=\frac{\cc_+(x)}{2}e^{\sqrt{2\alpha}x}+ \frac{\cc_-(x)}{2}e^{-\sqrt{2\alpha}x},\quad x\in\mathbb{R},
\]
where $(\cc_+,\cc_-)$ is a pair of functions on $\mathbb{R}$ satisfying $\cc_\pm\ll dx$, $d\cc_\pm/dx=0$ on $G$, and
\[
\frac{d\cc_+}{dx}e^{\sqrt{2\alpha}x}=\frac{d\cc_-}{dx}e^{-\sqrt{2\alpha}x}\text{ on }\mathbb{R}.
\]
Furthermore, let $\mm_\pm(dx)=e^{\pm 2\sqrt{2\alpha}x}dx$ and assume $f$ has the above form, then $f\in H^1(\mathbb{R})$ is equivalent to $\cc_+\in H^1(\mathbb{R};\mm_+)$ and $\cc_-\in H^1(\mathbb{R};\mm_-)$, where
\[
	H^1(\mathbb{R}; \mm_\pm):=\left\{u\in L^2(\mathbb{R},\mm_\pm): u'\in L^2(\mathbb{R}, \mm_\pm) \right\}.
\]
\end{theorem}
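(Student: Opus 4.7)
The plan is to mirror the proofs of Theorem~\ref{T1}, Theorem~\ref{T2}, Remark~\ref{RM24}(2), and Theorem~\ref{TBIJ} from Section~\ref{SEC2}, specialized to the subspace setting: the state space is the single interval $\mathbb{R}$ equipped with the scale function $\ss$, and $G=\{\ss'=1\}$ plays the role previously played by $U=\bigcup_n U_n$. The characterization of $\mathcal{G}_\alpha$ displayed just before the theorem serves as the analogue of Theorem~\ref{T1} and is taken as the starting point.

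For the representation, I follow the proof of Theorem~\ref{T2}. For the necessity direction, given $f\in\mathcal{G}_\alpha$, I fix $x_0\in G$ and set
\[
u(x):=f'(x_0)+2\alpha\int_{x_0}^x f(z)\,dz,
\]
which is absolutely continuous on $\mathbb{R}$ with $u'=2\alpha f$ a.e., and with $u=f'$ a.e.\ on $G$ by the displayed characterization. Defining $\cc_\pm(x):=e^{\mp\sqrt{2\alpha}x}(f(x)\pm u(x)/\sqrt{2\alpha})$ recovers the representation, and direct computation yields $d\cc_\pm/dx=e^{\mp\sqrt{2\alpha}x}(f'-u)$, which vanishes a.e.\ on $G$ and trivially satisfies the matching identity $\tfrac{d\cc_+}{dx}h_+=\tfrac{d\cc_-}{dx}h_-$ on $\mathbb{R}$. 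For sufficiency, I reverse the computation: set $u:=\tfrac{\sqrt{2\alpha}}{2}(\cc_+h_+-\cc_-h_-)$; the matching identity forces $u'=2\alpha f$, the vanishing of $d\cc_\pm/dx$ on $G$ forces $f'=u$ a.e.\ there, and integration recovers the defining integral identity. Uniqueness of $(\cc_+,\cc_-)$ follows by a short ODE argument: any difference $(\delta_+,\delta_-)$ satisfies $\delta_+h_++\delta_-h_-=0$ together with the matching identity, which combine to yield $d\delta_+/dx=-\sqrt{2\alpha}\delta_+$, and the requirement $d\delta_+/dx=0$ on the positive-measure set $G$ forces $\delta_+\equiv 0$.

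For the $H^1$-equivalence, the easy direction ($\cc_\pm\in H^1(\mathbb{R};\mm_\pm)\Rightarrow f\in H^1(\mathbb{R})$) follows from $|f|^2\le\tfrac12(\cc_+^2h_+^2+\cc_-^2h_-^2)$ and, writing $v:=\tfrac{d\cc_+}{dx}h_+=\tfrac{d\cc_-}{dx}h_-$ together with $f'=v+\tfrac{\sqrt{2\alpha}}{2}(\cc_+h_+-\cc_-h_-)$, the bound $|f'|^2\le 2v^2+2\alpha(\cc_+^2h_+^2+\cc_-^2h_-^2)$; integration and the identity $\int v^2\,dx=\|d\cc_\pm/dx\|_{L^2(\mm_\pm)}^2$ then bound $\|f\|_{H^1}^2$ by a constant times $\|\cc_+\|_{H^1(\mm_+)}^2+\|\cc_-\|_{H^1(\mm_-)}^2$. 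The hard direction is where the main obstacle lies. Given $f\in H^1$ with the representation form, the canonical $\cc_\pm$ from the necessity argument satisfies the first-order linear ODE $(\cc_+h_+)'=\sqrt{2\alpha}f+f'\in L^2(\mathbb{R})$, and the task is to upgrade this to $\cc_+h_+\in L^2(\mathbb{R})$. Following the strategy used in the proof of Theorem~\ref{TBIJ}, the general solution of this linear ODE contains a free integration constant multiplying $e^{\sqrt{2\alpha}x}$; the Hardy-type inequality \eqref{EQ3GG} applied with $\lambda=\sqrt{2\alpha}$ shows that the particular solution lies in $L^2$, and forces this constant to vanish on $L^2$-grounds, yielding $\cc_+\in L^2(\mm_+)$ with a quantitative bound in terms of $\|f\|_{H^1}$. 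The bound $d\cc_+/dx\in L^2(\mm_+)$ then follows from $v=f'-u$ together with the $L^2$-bound on $u$, and the argument for $\cc_-$ is symmetric; combining both directions delivers the claimed norm equivalence.
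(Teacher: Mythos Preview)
Your representation argument (necessity, sufficiency, and uniqueness) is correct and matches the paper's Theorem~\ref{T2}; the uniqueness via the short ODE computation is equivalent to the paper's appeal to the explicit inversion formula. The easy direction of the ``Furthermore'' is also fine.

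The hard direction ($f\in H^1\Rightarrow \cc_\pm\in H^1(\mathbb{R};\mm_\pm)$) has a genuine gap. First, your description of the ODE is off: the general solution of $(\cc_+h_+)'=g$ differs by an \emph{additive constant}, not by $Ce^{\sqrt{2\alpha}x}$; equivalently, as an ODE for $\cc_+$ the homogeneous solution is $Ce^{-\sqrt{2\alpha}x}$. Second, the Hardy inequality \eqref{EQ3GG} with $\lambda=\sqrt{2\alpha}$ applied to $g=f'+\sqrt{2\alpha}f$ does not produce $\cc_+h_+$: since $g(z)e^{\lambda z}=(f(z)e^{\lambda z})'$, one has
\[
e^{-\lambda x}\int_{-\infty}^x g(z)e^{\lambda z}\,dz \;=\; f(x),
\]
so \eqref{EQ3GG} merely restates $\|f\|_{L^2}\le(2\alpha)^{-1/2}\|f'+\sqrt{2\alpha}f\|_{L^2}$ and says nothing about $\cc_+h_+$. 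Third, the ``strategy of Theorem~\ref{TBIJ}'' that you invoke runs the opposite way: there one \emph{assumes} $\cc_+\in\mathfrak C_+$ (so $\cc_+h_+\in L^2$) and uses \eqref{EQ3GG} to pin down the unique compatible $\cc_-$ via \eqref{EQ3CCC}; it does not manufacture $L^2$-membership of $\cc_+h_+$ from $f\in H^1$.

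The actual content of the hard direction is that $u:=\tfrac{\sqrt{2\alpha}}{2}(\cc_+h_+-\cc_-h_-)$ lies in $L^2(\mathbb{R})$. In the extension setting (Remark~\ref{RM24}(2)) this is free: $\|f\|_{L^2}^2$ controls $\cc_+h_++\cc_-h_-$ while the $U$-part of $\EE(f,f)$ controls $\cc_+h_+-\cc_-h_-$, and $m(W)=0$ lets one add and subtract globally. In the subspace setting the analogue only yields $u=f'\in L^2$ on $G$ (where $\cc_\pm'=0$), while $G^c=\{\ss'=0\}$ can have positive Lebesgue measure; you still need to pass from $u\in L^2(G)$ together with $u'=2\alpha f\in L^2(\mathbb{R})$ to $u\in L^2(\mathbb{R})$. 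Your proposal does not supply this step, and it is exactly where the ``same as before'' of the paper hides the work.
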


Define
\begin{align*}
&\mathfrak C_\pm:=\{\cc\in H^1(\mathbb{R};\mm_\pm):d\cc/dx=0\text{ on }G \},\\
&\EE_\pm(\cc_1,\cc_2):=\frac{1}{2}\int_\mathbb{R} \frac{d\cc_1}{dx}\frac{d\cc_2}{dx}d\mm_\pm,\quad \cc_1,\cc_2\in\mathfrak C_\pm.
\end{align*}
Further define the mapping
\[
\Gamma_\pm: \mathcal{G}_\alpha \rightarrow \mathfrak C_\pm,\, f\mapsto \cc_\pm.
\]
\begin{theorem}
The mapping $\Gamma_\pm$ is a linear bijection and there exists a constant $K_\alpha>1$ such that for any $f\in\mathcal{G}_\alpha$,
 \[
 K_\alpha^{-1}\EE_{\pm, 1}(\Gamma_\pm f, \Gamma_\pm f)\leq \EE_1(f,f)\leq K_\alpha\EE_{\pm, 1}(\Gamma_\pm f, \Gamma_\pm f).
 \]
\end{theorem}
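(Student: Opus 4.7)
The plan is to mirror the proof of Theorem~\ref{TBIJ} almost verbatim, exploiting the simplification that in the subspace setting there is only a single interval $I=\mathbb{R}$ with a single scale function $\ss$, so $G$ plays the role of $U$ and $G^c$ plays the role of $W$, and no summation over $n$ is needed. Linearity of $\Gamma_\pm$ is immediate from the uniqueness of the representation established in the previous theorem in the appendix, so the two substantive tasks are (i) producing the inverse mapping, and (ii) the two-sided norm comparison.

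For the bijection, I would fix $\cc_+\in\mathfrak{C}_+$ and define
\[
\cc_-(x):=\cc_+(x)h_+(x)^2-2\sqrt{2\alpha}\int_{-\infty}^x\cc_+(z)h_+(z)^2dz,\quad x\in\mathbb{R},
\]
exactly as in \eqref{EQ3CCC}. Since $\cc_+h_+\in L^2(\mathbb{R})$, the inequality \eqref{EQ3GG} applied with $\lambda=\sqrt{2\alpha}$ forces $\cc_-h_-\in L^2(\mathbb{R})$, i.e.\ $\cc_-\in L^2(\mathbb{R},\mm_-)$. Differentiating the defining formula and using $h_+'=\sqrt{2\alpha}h_+$ yields $d\cc_-/dx=(d\cc_+/dx)h_+^2$, whence $d\cc_-/dx$ vanishes on $G$, the matching condition $(d\cc_+/dx)h_+=(d\cc_-/dx)h_-$ holds on $\mathbb{R}$ (using $h_+h_-\equiv 1$), and $\int_\mathbb{R}(d\cc_-/dx)^2h_-^2\,dx=\int_\mathbb{R}(d\cc_+/dx)^2h_+^2\,dx<\infty$. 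Thus $\cc_-\in\mathfrak{C}_-$, and $f:=(\cc_+h_++\cc_-h_-)/2$ belongs to $\mathcal{G}_\alpha$ with $\Gamma_+f=\cc_+$ by the preceding theorem. Injectivity: if another $\tilde\cc_-\in\mathfrak{C}_-$ yielded an element of $\mathcal{G}_\alpha$ paired with the same $\cc_+$, then subtracting the two integral identities shows $\cc_--\tilde\cc_-\equiv C$ on $\mathbb{R}$ for some constant $C$; but $(C)h_-\in L^2(\mathbb{R})$ forces $C=0$.

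For the norm equivalence, I would compute $\EE_1(f,f)=\frac12\int_\mathbb{R}(f')^2dx+\int_\mathbb{R}f^2dx$ by splitting the derivative according to $G$ and $G^c$, using $f'(x)=(\sqrt{2\alpha}/2)(\cc_+h_+-\cc_-h_-)$ on $G$ and the analogue of \eqref{EQ2DFT} on $G^c$ together with the matching condition of Remark~\ref{RM24}(1), arriving (exactly as in the proof of Theorem~\ref{TBIJ}) at
\[
\EE_1(f,f)=\EE_\pm(\Gamma_\pm f,\Gamma_\pm f)+\frac{\alpha}{2}\int_\mathbb{R}(\cc_+h_+-\cc_-h_-)^2dx+\frac14\int_\mathbb{R}(\cc_+h_++\cc_-h_-)^2dx.
\]
Applying \eqref{EQ3GG} to the representation of $\cc_--\cc_+h_+^2$ as an integral of $\cc_+h_+^2$ gives $\int_\mathbb{R}(\cc_+h_+-\cc_-h_-)^2dx\le 4(\Gamma_\pm f,\Gamma_\pm f)_{\mm_\pm}$, and elementary estimates on the second integral provide the reverse bound. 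One then concludes with $K_\alpha:=(2\alpha+4)\vee(\alpha\wedge\tfrac12)^{-1}$, matching the constant obtained in Theorem~\ref{TBIJ}.

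The one step that is not purely mechanical is the bidirectional control of the auxiliary term $\int_\mathbb{R}(\cc_+h_+-\cc_-h_-)^2dx$ by $(\Gamma_\pm f,\Gamma_\pm f)_{\mm_\pm}$; this is precisely the weighted Hardy-type bound \eqref{EQ3GG} already proved in the body of the paper, so once that lemma is in hand the argument in the subspace setting goes through without any new technical input.
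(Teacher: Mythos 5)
Your proposal is correct and takes essentially the same route as the paper: the paper omits this appendix proof precisely because it is the same as that of Theorem~\ref{TBIJ}, and you reproduce that argument faithfully --- the inverse via the integral formula \eqref{EQ3CCC}, uniqueness of the companion function via the Hardy-type bound \eqref{EQ3GG} (a constant $C$ with $Ch_-\in L^2$ must vanish), and the same energy decomposition yielding the two-sided comparison --- with the correct simplifications of the subspace setting ($I=\mathbb{R}$, $G$ in place of $U$, no summation over $n$, and the matching condition on all of $\mathbb{R}$ subsuming \textbf{(C2)}). The only caveat, inherited verbatim from the paper's own computation, is the bookkeeping of the constant in the middle term of the energy identity, which affects only the explicit value of $K_\alpha$ and not the validity of the equivalence.
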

It follows that $(\EE_\pm,\mathfrak C_\pm)$ is a Dirichlet form on $L^2(\mathbb{R})$ in the wide sense, and we have the same expression as Corollary \ref{C-FC}. So the regular representation can be derived exactly the same way as before with cases \textbf{(L3)} and \textbf{(R3)}.
\end{appendices}

\end{document}